\documentclass[12pt]{amsart}
\textwidth=14.5cm
\oddsidemargin=1cm
\evensidemargin=1cm
\usepackage{amsmath}
\usepackage{amsxtra}
\usepackage{amscd}
\usepackage{amsthm}
\usepackage{amsfonts}
\usepackage{amssymb}
\usepackage{eucal}

\usepackage{mathabx}

\usepackage{xcolor}

\input prepictex
\input pictex
\input postpictex
\usepackage{mathptm}

\usepackage{verbatim}


\newtheorem{claim}{\indent Claim}
\newtheorem{theorem}{Theorem}[section]
\newtheorem*{theorem*}{Theorem}
\newtheorem{lemma}[theorem]{Lemma}
\newtheorem{proposition}[theorem]{Proposition}
\newtheorem*{proposition*}{Proposition}

\newtheorem{definition}[theorem]{Definition}
\newtheorem{remark}[theorem]{Remark}

\numberwithin{equation}{section}


\newcommand{\Z}{{\mathbb Z}}

\newcommand{\C}{{\mathbb C}}

\newcommand{\fg}{{\mathfrak g}}
\newcommand{\fh}{{\mathfrak h}}
\newcommand{\fn}{{\mathfrak n}}

\newcommand{\U}{{\rm U}}


\def\a{\alpha}
\def\b{\beta}
\def\d{\delta}
\def\D{\Delta}

\def\l{\lambda}

\def\es{\epsilon}


\begin{document}

\title[Integrable  representations of affine superalgebras]
{Integrable representations of \\ affine $A(m, n)$ and $C(m)$ superalgebras}

\author{Yuezhu Wu}
\author{R. B. Zhang}
\address[Wu]{School of Mathematics and Statistics, Changshu Institute of Technology, Changshu, Jiangsu, China}
\address[Wu, Zhang]{School of Mathematics and Statistics, University of Sydney, Sydney, NSW 2006, Australia}
\address{School of Mathematical Sciences, University of Science and Technology of China, Hefei, China}
\email{yuezhuwu@maths.usyd.edu.au}
\email{ruibin.zhang@sydney.edu.au}

\begin{abstract}
Rao and Zhao classified the irreducible integrable modules with finite dimensional
weight spaces for the untwisted affine superalgebras which are not $\hat{A}(m,n)$ ($m\ne n$) or $\hat{C}(m)$.
Here we treat the latter affine superalgebras to complete the classification.
The problem boils down to classifying the irreducible zero-level integrable modules with finite dimensional
weight spaces for these affine superalgebras, which is solved in this paper.
We note in particular that such modules for $\hat{A}(m,n)$ ($m\ne n$) and $\hat{C}(m)$ must be of highest weight type,
but are not necessarily loop modules. This is in sharp contrast to the cases of ordinary affine algebras and the other types
of affine superalgebras.

\noindent{\bf Key words:} integrable modules; highest weight modules; evaluation modules, loop modules.
\end{abstract}
\maketitle


\section{Introduction}\label{sect:intro}

A finite dimensional simple Lie superalgebra $\fg$ over the field $\C$ of complex numbers
is called basic classical  \cite{k77} if its even subalgebra $\fg_{\bar0}$ is reductive and
$\fg$ carries an even non-degenerate supersymmetric invariant bilinear form $(\cdot\, | \, \cdot )$.
The full list of such simple Lie superalgebras can be found in \cite{k77}.
Fix a simple basic classical Lie superalgebra $\fg$ and let $L=\C[t,t^{-1}]$ be the algebra of Laurent polynomials in the
indeterminate $t$.  The untwisted affine superalgebra $G$ associated to $\fg$ \cite{JZ} is
$$G=\fg\otimes L\oplus \C c\oplus \C d$$
with the commutation relations defined as follows.  For any $X\in\fg$ and $m\in\Z$,
we denote $X(m)=X\otimes t^m$. Then for all  $a, b\in\fg$ and $m, n\in\Z$,
\[
\begin{aligned}
&{[}c,G]=0,  \qquad [d,a(m)]=m a(m), \\
&{[}a(m),b(n)]=[a,b](m+n)+ c m (a\,|\,b) \d_{m+n, 0}.
\end{aligned}
\]
We shall also denote the affine superalgebra $G$ by $\hat\fg$ following the convention of \cite{k83}.
Note that the even subalgebra of $G$ is $G_{\bar 0}=\mathfrak{g}_{\bar 0}\otimes L\oplus \C c \oplus \C d$,
which is the affine algebra of $\fg_{\bar0}$.

Let $H=\mathfrak{h}\oplus \C c \oplus \C d$, where $\fh$ is a Cartan subalgebra of $\fg$.
If $V$ is a $\Z_2$-graded $G$-module,  we denote by $V_\lambda$ the weight space of  $V$ with
weight $\lambda\in H^*$.  The module  is called integrable if (i) $V=\oplus_{\l\in H^*}V_\l$ and (ii) when restricted to a
$G_{\bar 0}$-module, $V$ is integrable in the usual sense (see \cite{ch86} and \cite[\S 3.6]{k83}).

The irreducible integrable modules with finite dimensional weight spaces for affine algebras associated
with finite dimensional simple Lie algebras were
classified by Chari \cite{ch86}, who proved that such modules comprise of  irreducible integrable highest weight
modules, irreducible integrable lowest weight modules and loop modules.
The irreducible integrable modules for affine superalgebras
were investigated systematically by S. Rao and K. Zhao
and others (see \cite{rz04} and references therein).

Let $V$ be an irreducible integrable module for $G$ with finite dimensional weight spaces.
Since $c$ is also the central extension of $G_{\bar0}$,  it is known \cite{k83} that $c$ must act on $V$ by an integer,
which is called the level of $V$.
It has long been known \cite{JZ, kw01} that the affine superalgebras $\hat{A}(m,n)$ ($m\ge 1,\,n\ge 1$),
$\hat{B}(m,n)$ ($m\ge 1, n\ge 1$),  $\hat{D}(m,n)$  ($m\ge 2, n\ge 1)$,  $\hat{D}(2,1,\a)$,  $\hat{F}(4)$ and $\hat{G}(3)$ do not
admit any integrable modules of nonzero level.
In the cases when $G$ is $\hat{A}(0,n) $($n\ge 1$), $\hat{B}(0,n)$ ($n\ge 1$) and $\hat{C}(m)$ ($m\ge 3$),
if the level is a positive (resp. negative) integer, then by results of \cite{rz04}, $V$ is a highest (resp. lowest) weight module
with respect to the Borel subalgebra $\mathfrak{b}\oplus\fg\otimes t\C[t]\oplus\C c\oplus\C d$ of $G$, where $\mathfrak{b}$ is a Borel subalgeba of $\fg$.
Therefore, the classification of irreducible integrable modules reduces to the classification of
those of zero-level.

It is proved in \cite{rz04}  that any irreducible zero-level integrable module
with finite dimensional weight spaces for the affine superalgebra $G$
is an loop module provided that $G$ is not  $\hat{A}(m,n)$ ($m\ne n$) or $\hat{C}(m)$.
The method used to prove this in \cite{rz04} is an adaption to the affine superalgebra context of the method developed by Chari \cite{ch86} for ordinary (i.e., non-super) affine algebras.
Semi-simplicity of $\fg_{\bar0}$ was used in a crucial way in proving that irreducible integrable $G$-modules were of highest weight
type. This condition is not met in the cases of $A(m,n)$ ($m\ne n$) and $C(m)$.
This is a main reason why the method of \cite{rz04} failed to produce a complete classification.

The aim of this paper is to complete the classification of irreducible integrable modules for untwisted affine superalgebras
started by Rao and Zhao by treating the affine superalgebras $\hat{A}(m,n)$ ($m\ne n$) and $\hat{C}(m)$.
This is achieved in Theorem \ref{thm:main}.
The irreducible zero-level integrable modules with finite dimensional weight spaces
for $\hat{A}(m,n)$ ($m\ne n$) and $\hat{C}(m)$
are all highest weight modules with respect to the triangular decomposition
\eqref{eq:triangular} (see Theorem \ref{thm:hw}), but in sharp contrast to Theorem \ref{thm:ev}
for ordinary affine algebras and the other types of affine superalgebras,
such modules are not necessarily loop modules. The necessary and sufficient conditions for
a simple highest weight module to be a loop module is given in
Lemma \ref{lem:ev-condition} and Remark \ref{rem:final}.

Let us briefly describe the content of this paper.
In Section \ref{sect:ev-mod} we recall the construction of loop modules for affine superalgebras.
In Section  \ref{sect:hw-mod} we prove that any irreducible zero-level integrable module with finite dimensional weight spaces
for $\hat{A}(m,n)$ ($m\ne n$) and $\hat{C}(m)$
must be a highest weight module.  The result is given in Theorem \ref{thm:hw}.
In Section \ref{sect:new} we construct irreducible  integrable modules (see Definition  \ref{def:V}) for
these affine superalgebras, which include the irreducible loop modules as a special case.
In the last section we prove the main result of this paper, that is, Theorem \ref{thm:main},
which states that any irreducible zero-level integrable module with finite dimensional weight spaces
for $\hat{A}(m,n)$ ($m\ne n$) and $\hat{C}(m)$ is one of the modules given in Definition \ref{def:V}.

We point out that the method used in this paper to classify irreducible integrable modules
is very much inspired by the work \cite{r11} of Rao on finite dimensional
modules for multi-loop superalgebras. It is very different from that of \cite{rz04}.
As far as we are aware, the method of
\cite{rz04} has not been improved to deal with
$\hat{A}(m,n)$ ($m\ne n$) and $\hat{C}(m)$.

\section{Loop modules}\label{sect:ev-mod}
We recall results from \cite{rz04,r04} on irreducible integral modules for affine superalgebras, which will be needed later.

\subsection{Highest weight modules}

Given a simple basic classical Lie superalgebra $\fg$, we let $\fg=\mathfrak{n}^-\oplus \mathfrak{h}\oplus \mathfrak{n}^+$
be the triangular decomposition with $\mathfrak{b}=\mathfrak{n}^+\oplus \mathfrak{h}$ being a distinguished Borel subalgebra and $\mathfrak{h}$ a Cartan subalgebra.

Denote $L(\fg)=\fg\otimes L$ and $L(\mathfrak{h})=\mathfrak{h}\otimes L$ with $L=\C[t,t^{-1}]$.  Let
\[
 G^{\pm }=\mathfrak{n}^{\pm }\otimes L,  \quad
T_0=L(\mathfrak{h})\oplus\C c, \quad T=T_0\oplus\C d.
\]
The affine superalgebra $G=L(\fg)\oplus \C c\oplus \C d$ associated with $\fg$ contains the subalgebra $G'=L(\fg)\oplus \C c$. We have
the following  triangular decompositions for $G$ and $G'$:
\begin{equation}\label{eq:triangular}
G=G^-\oplus T\oplus G^+, \quad G'=G^-\oplus T_0\oplus G^+.
\end{equation}

We shall deal only with elements $\l\in H^*$ such that $\l(c)=0$, i.e.,
$\l\in (\mathfrak{h}\oplus \C d)^*.$
A module $V$ of $G$ (resp. $G'$) is called a {\it highest
weight module} if there exists a weight vector $v\in V$ with respect
to $\mathfrak{h}\oplus\C c\oplus\C d$ (resp. $\mathfrak{h}\oplus\C c$) such that (1)
$\U(G)v=V$ (resp. $\U(G')v=V$), (2) $G^+v=0$, and (3)
$\U(T)v$ (resp.  $\U(T_0)v$) is an irreducible
$T$-module (resp.  $T_0$-module).  The vector $v$ is called a highest
weight vector of $V$.

Let $\tilde\varphi : \U(T_0)\rightarrow L$ be a $\Z$-graded algebra homomorphism
such that $\tilde\varphi(c)=0$ and $\tilde\varphi|_\mathfrak{h}\in
\mathfrak{h}^*$. Then for any given $b\in\C$, we can turn $L$ into
a $T$-module via $\tilde\varphi$ defined for all $f\in L$ by
\begin{eqnarray}\label{eq:T-mod}
d f = \left(t\frac{d }{d t} + b\right)f,\quad c f=0, \quad h(m) f=\tilde\varphi(h(m)) f, \quad h(m)\in L(\mathfrak{h}).
\end{eqnarray}
We write $\varphi=(\tilde\varphi, b)$ and denote by $L_\varphi$ the image of  $\tilde\varphi$
regarded as a $\Z$-graded $T$-submodule.
It was shown in \cite[\S 3]{ch86}  that if $L_\varphi$ is
a simple $T$-module, it must be $L_0:=\C$ or a Laurent subring
$L_r:=\C[t^r,t^{-r}]$ for some integer $r> 0$.

Assume that  $L_\varphi$ is a simple $T$-module. We extend $L_\varphi$ to a module over $B:=G^+\oplus T$
with $G^+$ acting trivially, and construct the induced $G$-module
\begin{eqnarray}\label{eq:Verma}
M(\varphi)=\U(G)\otimes_{\U(B)}L_\varphi.
\end{eqnarray}
This has a unique irreducible quotient, which we denote by ${\hat V}(\varphi)$.  Then every
irreducible highest weight $G$-module is isomorphic to some ${\hat V}(\varphi)$.

\begin{remark}
Let $\C_a$ denote the $1$-dimensional $G$-module with $G'$ acting trivially and $d$ acting by multiplication by $a\in\C$,  then $\hat V(\tilde\varphi, b+a)\cong \hat V(\tilde\varphi, b)\otimes_\C\C_a$.
\end{remark}

Define the evaluation map $S:L \rightarrow \C$, $t\mapsto 1$ and set $\psi=S\circ \tilde\varphi: \U(T_0)\rightarrow
\C$. Let $\U(T_0)$ act on the one dimensional vector
space $\C_\psi=\C$ by $\psi$.  We extend $\C_\psi$ to a module over $B':=G^+\oplus
T_0$ by letting $G^+$ act trivially.
Construct the induced $G'$-module
\[
M(\psi)=\U(G')\otimes_{\U(B')}\C_\psi,
\]
which also has a unique simple quotient $V(\psi)$.

Form the vector space $V(\psi)\otimes L$ and denote $w(s)=w\otimes t^s$
for any $w\in V(\psi)$ and $s\in \Z$.  We now turn $V(\psi)\otimes L$ into a
$G$-module by defining the action
\begin{eqnarray}\label{L-module}
\begin{aligned}
& c w(s)=0, \quad d w(s)=(s+b) w(s),\\
& x(m)w(s)=(x(m)w)(s+m), \quad x(m)\in L(\fg).
\end{aligned}
\end{eqnarray}

The following results due to Rao and Zhao \cite{rz04,r04} will be important later.  In \cite{rz04,r04}
only the $b=0$ case was stated, which implies the general case.

\begin{theorem}\label{iso}\cite{rz04,r04}
Let $\varphi$ and $\psi$ be as above. Assume that $L_\varphi\cong L_r$ is
an irreducible $T$-module. Let  $v$ be a highest weight vector of $V(\psi)$ and denote
$v(i)=v\otimes t^i$ for any $i\in \Z$. Then
\begin{enumerate}
\item
$V(\psi)\otimes L\cong \oplus_{i=0}^{r-1}\U(G)v(i)$ as
$G$-modules, where $\U(G)v(i)$ are irreducible $G$-submodules. Furthermore, $\U(G)v(0)\cong \hat V(\varphi)$.

\item $\hat V(\varphi)$ has finite dimensional weight spaces with respect
to $\mathfrak{h}\oplus \C d$ if and only if $V(\psi)$ has finite
dimensional weight spaces with respect to $\mathfrak{h}$.

\item
$V(\psi)$ has finite dimensional weight spaces if and only if $\psi$ factors through $\mathfrak{h}\otimes L/I$ for some co-finite ideal $I$ of $L$. In this case $(\fg\otimes I)V(\psi)=0.$
\end{enumerate}
\end{theorem}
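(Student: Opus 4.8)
The plan is to prove the three assertions in turn, the main tool being a finite cyclic symmetry of $V(\psi)$ forced by $L_\varphi\cong L_r$, together with the contravariant form on highest weight modules; throughout write $\lambda_0:=\psi|_{\fh}$. Since $L_\varphi\cong L_r$, the graded homomorphism $\tilde\varphi$ has $\tilde\varphi(h(m))\in\C\,t^m$ vanishing unless $r\mid m$, so $\psi=S\circ\tilde\varphi$ is invariant under the loop rotation $\sigma_\zeta\colon x(m)\mapsto\zeta^m x(m),\ c\mapsto c$, with $\zeta=e^{2\pi i/r}$. By irreducibility of $V(\psi)$, $\sigma_\zeta$ lifts to an automorphism $\Theta$ with $\Theta v=v$ and $\Theta(x(m)w)=\zeta^m x(m)\Theta(w)$, and $\Theta^r=\mathrm{id}$ yields a $\Z/r\Z$-grading $V(\psi)=\bigoplus_j V_j$ with $x(m)V_j\subseteq V_{j+m}$ and $v\in V_0$. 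I would transport this to $V(\psi)\otimes L$ by declaring $w\otimes t^s$ (for $w\in V_j$) to have degree $(j-s)\bmod r$; the action \eqref{L-module} preserves it, so $V(\psi)\otimes L=\bigoplus_p M^{(p)}$ with $v(i)\in M^{(-i)}$, whence $\U(G)v(i)\subseteq M^{(-i)}$.

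The loop action also gives $\sum_i\U(G)v(i)=V(\psi)\otimes L$: writing $w=Xv$ with $X\in\U(G^-)$ and splitting $X$ into its $t$-homogeneous parts $X_e$, formula \eqref{L-module} gives $w(s)=\sum_e X_e\,v(s-e)$, each summand lying in some $\U(G)v(i)$; comparison with the grading makes the sum direct, so $\U(G)v(i)=M^{(-i)}$. For irreducibility, let $0\ne N\subseteq M^{(-i)}$ be a submodule; as $\fh\oplus\C d$ acts semisimply and the $\fh$-weights are bounded above by $\lambda_0$, $N$ contains a bi-weight vector $w\otimes t^s$ of maximal $\fh$-weight. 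Maximality forces $(\fn^+\otimes L)w=0$, so $w$ is a highest weight vector of the irreducible $V(\psi)$, i.e. $w\in\C v$ and $w\otimes t^s$ is a multiple of $v(s)$ with $s\equiv i\pmod r$; since $\psi(h(m))\ne 0$ forces $r\mid m$, we have $v(i)\in\U(T)v(s)\subseteq N$, hence $N=M^{(-i)}$. Finally $\U(G)v(0)=M^{(0)}$ is a quotient of $M(\varphi)$, because $v(0)$ satisfies $G^+v(0)=0$ and $\U(T)v(0)\cong L_\varphi$ as $T$-modules; being irreducible it must be the unique irreducible quotient $\hat V(\varphi)$.

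Assertion (2) then reads off this picture: the weight space of $\hat V(\varphi)=M^{(0)}$ of $(\fh,d)$-weight $(\mu,s+b)$ is $(V_{s\bmod r}\cap V(\psi)_\mu)\otimes t^s$, so its dimension is at most $\dim V(\psi)_\mu$, while $\dim V(\psi)_\mu=\sum_{j=0}^{r-1}\dim(V_j\cap V(\psi)_\mu)$ is a finite sum of such dimensions; the two finiteness statements are therefore equivalent. For (3) I would use the contravariant form $\langle\cdot,\cdot\rangle$ attached to the degree-preserving Chevalley anti-involution $\omega$ (with $\omega(x(m))=\omega(x)(m)$ and $\omega|_\fh=\mathrm{id}$), which is nondegenerate on each weight space since $V(\psi)$ is irreducible. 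If $\psi(\fh\otimes I)=0$ for a cofinite ideal $I$, then for a positive root $\alpha$, $f\in I$ and $g\in L$ one computes $\langle x_{-\alpha}(f)v,\,x_{-\alpha}(g)v\rangle=\psi([x_\alpha,x_{-\alpha}]\otimes fg)=0$ because $fg\in I$; an induction on the height of $\alpha$—commuting $x_\alpha(f)$ through a PBW monomial, using that $I$ is an ideal so that all Laurent polynomials produced remain in $I$, and that $(\fn^+\oplus\fh)\otimes I$ annihilates $v$—shows $x_{-\alpha}(f)v\perp V(\psi)_{\lambda_0-\alpha}$, hence $x_{-\alpha}(f)v=0$. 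Thus $(\fg\otimes I)v=0$, and since $\fg\otimes I$ is an ideal of $L(\fg)$ with $c$ acting by $0$, this propagates to $(\fg\otimes I)V(\psi)=0$; then $V(\psi)$ is a highest weight module for the finite dimensional superalgebra $\fg\otimes(L/I)$, so all its weight spaces are finite dimensional. Conversely, finiteness makes the spanning set $\{x_{-\alpha_i}(m)v\}_m$ of $V(\psi)_{\lambda_0-\alpha_i}$ dependent, giving $x_{-\alpha_i}(p_i)v=0$; pairing against $x_{-\alpha_i}(g)v$ forces $\psi(h_{\alpha_i}\otimes t^n p_i)=0$ for all $n$, and since the simple coroots span $\fh$ for the $\fg$ in question, $I=\bigcap_i(p_i)$ is cofinite with $\psi(\fh\otimes I)=0$.

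The step I expect to demand the most care is the vanishing $(\fg\otimes I)V(\psi)=0$: the inductive orthogonality argument must be arranged to handle odd and especially isotropic roots, where $x_{-\alpha}(m)^2$ may vanish and the relevant rank-one subalgebra is $\mathfrak{osp}(1|2)$ or degenerate, so the reduction to $\fh$ must proceed through the bracket relations and root strings rather than a naive $\mathfrak{sl}_2$ calculation. A second, more structural point enters only in the converse of (3): the relations forced by finite dimensionality constrain $\psi$ along the coroots, so deducing that $\psi$ factors through all of $\fh\otimes(L/I)$ requires the coroots to span $\fh$; this holds for the basic classical superalgebras relevant here but is exactly the kind of structural input that has to be verified in each case.
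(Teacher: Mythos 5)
Your proposal is essentially correct, but there is no internal proof in the paper to compare it against: Theorem \ref{iso} is imported wholesale from \cite{rz04,r04}, the authors adding only the remark that the $b=0$ case stated there implies general $b$ (by tensoring with the one-dimensional module $\C_a$). Measured against those sources, your treatment of (1)--(2) --- the loop rotation $\sigma_\zeta$ lifted to a semilinear automorphism $\Theta$ with $\Theta^r=\mathrm{id}$, the induced $\Z/r\Z$-grading of $V(\psi)$ transported to $V(\psi)\otimes L$, and the maximal-weight-vector argument showing each graded component is irreducible and generated by $v(i)$ --- is a clean repackaging of Chari's original mechanism \cite{ch86,cp86}; the grading makes both the directness of the sum and assertion (2) (via $\dim V(\psi)_\mu=\sum_j\dim(V_j\cap V(\psi)_\mu)$, legitimate since $\Theta$ preserves $\fh$-weight spaces) immediate. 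Where you genuinely diverge is (3): your Shapovalov-form argument, $\langle x_{-\alpha}(f)v,x_{-\alpha}(g)v\rangle=\psi(h_\alpha\otimes fg)$ together with straightening against $v$ and nondegeneracy of the form on the irreducible quotient, yields $(\fg\otimes I)V(\psi)=0$ with no integrability hypothesis; the route the paper itself later rehearses (Section \ref{sect:main}, following \cite[Lemma 3.7]{r04}) extracts polynomials $P_\alpha$ for the simple roots exactly as in your converse and takes the ideal generated by $\prod_{\alpha\in\Pi}P_\alpha(t)$, which is cofinite just like your $\bigcap_i(p_i)$. Your worry about isotropic odd roots is overly cautious: the straightening never needs rank-one $\mathfrak{sl}_2$ or $\mathfrak{osp}(1|2)$ theory, only the super bracket, the ideal property of $I$, weight-orthogonality of the form, and the vanishing of all cocycle terms at level zero.

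Four small repairs. (i) You tacitly assume $r>0$ (your $\zeta=e^{2\pi\sqrt{-1}/r}$); the $r=0$ case is the separate convention stated after the theorem. (ii) Your anti-involution $\omega(x(m))=\omega(x)(m)$ extends to $G'$ only with $\omega(c)=-c$, because the cocycle $m\,(a|b)\,\delta_{m+n,0}$ is skew under it; since $\psi(c)=0$ this is harmless, but you should either note it or work over $L(\fg)$ outright. (iii) The step $v(i)\in\U(T)v(s)$ needs a graded element of $\U(T_0)$ of degree $\pm r$ with nonzero $\tilde\varphi$-image; a single $h(\pm r)$ with $\psi(h(\pm r))\ne 0$ need not exist, since $\mathrm{im}\,\tilde\varphi=L_r$ only guarantees $t^{\pm r}$ as a product of homogeneous images --- use $\U(T_0)$ rather than $L(\fh)$. (iv) The coroot-spanning input you flag in the converse of (3) is genuinely needed but easily discharged: for basic classical $\fg$, $[x_\alpha,x_{-\alpha}]$ is a nonzero multiple of the image of $\alpha$ under the isomorphism $\fh^*\cong\fh$ given by the nondegenerate invariant form (nonzero even for isotropic $\alpha$), and the simple roots span $\fh^*$ for all the algebras in play; this is in effect what the paper relies on when it runs the product over all $\alpha\in\Pi$.
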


We note that $\U(G)v(i)\cong V(\tilde\varphi, b+i)$. In the case $r=0$, the formula in part (1)  of the theorem should be understood as $V(\psi)\otimes L\cong \oplus_{i\in\Z}\U(G)v(i)$.

\begin{remark}\label{iso-semi}
Similar arguments as those in \cite{rz04,r04} can show that Theorem \ref{iso}
 still holds when $\fg$ is a  semi-simple Lie algebra.
\end{remark}

\begin{remark}\label{ideal} The co-finite ideal $I$ can be chosen to be generated by a polynomial $P(t)$ (see \cite{r04}).
By multiplying it by $t^m$  $(m\in\Z)$, we can also assume that $P(t)$ has non-zero roots.
\end{remark}

\subsection{Loop modules}
Now we recall the construction of  loop modules.
Denote by $V(\l)$ the irreducible highest weight $\fg$-module
with highest weight $\l\in\mathfrak{h}^*$.
Let $K$ be a positive integer, and fix a $K$-tuple $\underline{a}=(a_1,\dots ,a_K)$
of complex numbers, which are all distinct and non-zero.
Define a Lie superalgebra homomorphism
\begin{eqnarray} \label{eq:zeta}
\zeta:L(\fg)\rightarrow \fg_K=\underbrace{\fg\oplus \cdots \oplus \fg}_K, \quad
                    \zeta(x\otimes t^m)=(a_1^mx,\cdots,a_K^mx),
\end{eqnarray}
for all $x\in\fg$ and $m\in\Z$. Then $\zeta$ is surjective under the given conditions for $\underline{a}$.

\begin{remark}\label{rem:ideal}
Let $I$ be the ideal of $L$ generated by $P(t)=\prod\limits_{i=1}^K(t-a_i)$.
 It was shown in \cite{r04} that ${\rm ker}\zeta=\fg\otimes I$ and $\fg\otimes L/I\cong \fg_K$.
\end{remark}

Given irreducible $\fg$-modules $V(\l_1),\dots,V(\l_K)$ with integral dominant highest
weights $\l_1,\dots,\l_K$ respectively,  we let
\begin{eqnarray}\label{eq:ev-G'}
V(\underline{\l},\underline{a})= V(\l_1)\otimes V(\lambda_2)
\otimes \dots \otimes V(\lambda_K),
\end{eqnarray}
where  $\underline{\l}=(\l_1,\dots,\l_K)$. Then $V(\underline{\l},\underline{a})$
is an irreducible highest weight $L(\fg)$-module via $\zeta$, where irreducibility follows from the surjectivity of $\zeta$.
Such modules are called evaluation modules for $L(\fg)$.

Let $v_i$ be a highest weight vector of $V(\lambda_i)$
for each $i$. Then $v:=v_1\otimes v_2 \otimes \dots\otimes v_K$ is a highest weight vector of $V(\underline{\l},\underline{a})$
satisfying
\begin{eqnarray}\label{eq:hw-tensor}
(h\otimes t^m)v= \sum_{j=1}^Ka_j^m\l_j(h) v, \quad \forall h\in\mathfrak{h}.
\end{eqnarray}

Define an algebra homomorphism $\psi: \U(T_0)\rightarrow \C$ by $\psi(c)=0$ and
\[
\psi(h\otimes t^m)=\sum_{j=1}^Ka_j^m\l_j(h), \quad  \forall h\in\fh, \ m\in \Z.
\]
Then it follows from \eqref{eq:hw-tensor}  that $V(\psi)\cong V(\underline{\l},\underline{a})$
since $V(\psi)$ is determined by $\psi$.

Introduce the $\Z$-graded algebra homomorphism
$\tilde\varphi:\U(T_0)\rightarrow L$ defined by
\begin{eqnarray}\label{eq:phi}
\tilde\varphi(c)=0, \quad \tilde\varphi(h\otimes t^m)= \sum_{j=1}^Ka_j^m\l_j(h) t^m, \quad \forall  h\in\mathfrak{h}, \ m\in\Z.
\end{eqnarray}
Then $im(\tilde\varphi)$ is a simple $T$-module via \eqref{eq:T-mod} for any fixed $b\in\C$,
and there exists an integer $r\ge 0$ such that $im(\tilde\varphi)=L_r$ by \cite{cp86}.  [Note that
$r=0$ if and only if $\lambda_i=0$ for all $i$.]
It follows from Theorem \ref{iso}
that $V(\underline{\l},\underline{a})\otimes L=\oplus_{i=0}^{r-1} \hat V_i(\underline{\l},\underline{a})$, where
$\hat V_i(\underline{\l},\underline{a}):= \U(G)(v\otimes t^i)$. Note in particular that $\hat V_0(\underline{\l},\underline{a})=\hat V(\varphi)$ with $\varphi=(\tilde\varphi, b)$.

\begin{definition}
Call the $G$-modules $\hat V_i(\underline{\l},\underline{a})$ simple loop modules.
\end{definition}

If $r=1$,  then $\hat V(\underline{\l},\underline{a})\otimes L$ is simple.
When $r>1$,  assuming $\l_i\ne 0$ for all $i$, we have
$K=r S$ for some positive integer $S$, and there exists a permutation $\sigma$
such that $\sigma(\underline{\lambda})=(\lambda_{\sigma(1)}, \lambda_{\sigma(2)}, \dots, \lambda_{\sigma(K)})$
and $\sigma(\underline{a})= (a_{\sigma(1)}, a_{\sigma(2)}, \dots, a_{\sigma(K)})$ are respectively given by
\[
\begin{aligned}
\sigma(\underline{\lambda})&=(\underbrace{\mu_1, \mu_1, \dots, \mu_1}_r, \underbrace{\mu_2, \mu_2,\dots, \mu_2}_r, \dots, \underbrace{\mu_S, \mu_S, \dots, \mu_S}_r),\\
\sigma(\underline{a})&=(b_1, \omega b_1,\dots, \omega^{r-1} b_1, b_2, \omega b_2 \dots, \omega^{r-1} b_2, \dots, b_S, \omega b_S, \dots, \omega^{r-1}b_S),
\end{aligned}
\]
where $\omega=\exp\left(\frac{2\pi\sqrt{-1}}{r}\right)$,  and $b_1, \dots, b_S\in \C^*$ such that $b_i/b_j$ is not a power of $\omega$ if $i\ne j$.
The ideal $I\unlhd L$ in Theorem \ref{iso}(3) associated with $V(\psi)=V(\underline{\l},\underline{a})$ is generated by the polynomial
$
\prod_{s=1}^S \prod_{i=0}^{r-1}(t - \omega^i b_s)=\prod_{s=1}^S (t^r - b_s^r).
$
It immediately follows from the obvious fact $\sum_{i=0}^{r-1} \omega^i=0$ that $\psi(h\otimes t^m)=0$ in this case and hence $\varphi(h\otimes t^m)=0$  unless $r|m$.
The loop modules $\hat V_i(\underline{\l},\underline{a})$ when $r>1$
can be explicitly described as in \cite[\S 4]{cp86}, but their detailed structure will not play any significant role in the remainder of the present paper.

We have the following result.
\begin{theorem}\cite{ch86, cp86, rz04}\label{thm:ev}
Suppose $\fg$ is a simple Lie algebra or is a basic classical Lie superalgebra not of type $A(m,n)$ ($m\ne n$) or $C(m)$.
Then any irreducible zero-level integrable module for the associated affine (super)algebra $G$
with finite dimensional weight spaces is a simple loop module.
\end{theorem}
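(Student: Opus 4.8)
The plan is to split the statement into two parts: (i) $V$ is a highest weight module with respect to the triangular decomposition \eqref{eq:triangular}, and (ii) the associated $G'$-module is an evaluation module. Part (ii) is then essentially bookkeeping on top of the results already recorded. Granting (i) we may write $V\cong\hat V(\varphi)$ with $\varphi=(\tilde\varphi,b)$ and $L_\varphi\cong L_r$ a simple $T$-module; Theorem \ref{iso}(2)--(3) transfers the finite dimensionality of the weight spaces of $V$ to the statement that the corresponding $\psi$ factors through $\fh\otimes L/I$ for a co-finite ideal $I$ with $(\fg\otimes I)V(\psi)=0$. By Remark \ref{ideal} we take $I=(P(t))$ with $P(t)$ having distinct nonzero roots $a_1,\dots,a_K$, so that $V(\psi)$ descends to an irreducible integrable module for $\fg\otimes L/I\cong\fg_K$ (Remark \ref{rem:ideal}). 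Such a module is an outer tensor product $V(\l_1)\otimes\cdots\otimes V(\l_K)$, and integrability forces every $\l_i$ to be dominant integral and every $V(\l_i)$ finite dimensional; hence $V(\psi)\cong V(\underline\l,\underline a)$ and $V\cong\hat V_0(\underline\l,\underline a)$ is a simple loop module.

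The substance of the argument is part (i), and it is here that the hypothesis on $\fg$ is used decisively. When $\fg$ is not of type $A(m,n)$ ($m\ne n$) or $C(m)$ the even subalgebra $\fg_{\bar0}$ is semisimple, and $\fh$ is a common Cartan subalgebra of $\fg$ and $\fg_{\bar0}$. I would restrict $V$ to $G_{\bar0}=\hat\fg_{\bar0}$, which is then an ordinary affine Lie algebra (a sum of affine algebras over the simple ideals of $\fg_{\bar0}$). The restricted module is integrable, of zero level, with finite dimensional weight spaces, so by the non-super case of the theorem (Chari \cite{ch86}) each of its irreducible constituents is a loop module. The consequence I want to extract is that the set of restrictions to $\fh$ of the weights of $V$ is finite: this projection is invariant under the finite Weyl group of $\fg_{\bar0}$ and is bounded by the $\mathfrak{sl}_2$-theory coming from integrability, while the infinitude of the weight set of $V$ is accounted for entirely by the $d$-direction.

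With this finiteness in hand, I would fix an ordering, choose a weight $\mu$ of $V$ whose projection to $\fh^*$ is maximal and whose $d$-eigenvalue is maximal among weights with that projection, and take $0\ne v\in V_\mu$. The claim is $G^+v=0$. For the even part $\fn^+\otimes L$ this follows from the maximality of $\mu$ together with integrability, exactly as in \cite{ch86}. For the odd part the situation is different, since odd root vectors carry no $\mathfrak{sl}_2$-triple; here I would use that $G_{\bar1}=\fg_{\bar1}\otimes L$ is a module over $G_{\bar0}$ with weights whose classical projections remain in the same finite, bounded set, so that a nonzero image of $v$ under an odd raising operator would violate the maximality of the projection of $\mu$. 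Once $G^+v=0$ is established, the classification of graded simple $T$-modules from \cite{ch86} identifies $\U(T)v$ with some $L_r$, and irreducibility of $V$ gives $\U(G)v=V$; thus $V\cong\hat V(\varphi)$, completing part (i).

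The step I expect to be the main obstacle is the control of the odd raising operators just described. Unlike the even root vectors they admit no direct integrability constraint, so the boundedness of the even weight structure must be propagated through the adjoint action of $G_{\bar0}$ on $G_{\bar1}$; it is precisely the semisimplicity of $\fg_{\bar0}$ that makes this propagation---and the underlying finiteness of classical projections---available. This is exactly the ingredient that fails for $A(m,n)$ ($m\ne n$) and $C(m)$, where the central torus in $\fg_{\bar0}$ destroys the finiteness and hence the entire argument, which is why those cases require the different treatment developed in the body of the paper.
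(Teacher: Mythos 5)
First, a point of comparison: the paper gives no proof of Theorem \ref{thm:ev} at all; it is quoted from \cite{ch86, cp86, rz04}, and the introduction describes the strategy of those proofs exactly as you outline it (Chari's method, with semisimplicity of $\fg_{\bar0}$ used crucially to get highest-weight type). So your two-step plan is the right reconstruction of the cited argument. However, there is a genuine gap at the step you rely on most. You derive the finiteness of the set of classical projections of the weights of $V$ by restricting to $G_{\bar0}$ and applying Chari's classification to ``each of its irreducible constituents.'' The restriction of $V$ to $G_{\bar0}$ is not known to be completely reducible, and at level zero it genuinely need not be: complete reducibility results for integrable modules concern nonzero level (cf.\ \cite{r03}), and one can manufacture indecomposable, non-irreducible zero-level integrable weight modules with finite dimensional weight spaces by looping a non-split extension of evaluation modules of $\fg_{\bar 0}\otimes\C[t]/(t-a)^2$. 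Chari's theorem applies only to irreducible modules, so it cannot be quoted constituent-by-constituent. The finiteness statement is true, but it is precisely the hard content of \cite{ch86, rz04}: one exploits finite dimensionality of individual weight spaces to obtain a linear recurrence among vectors of the form $f_\a(k)f_\a(-k)v$ --- the same mechanism as equation \eqref{N} and Lemma \ref{lem:v-vector} of the present paper --- and thereby bounds $\mu(h_\a)$ uniformly over weights $\mu$ and real roots $\a$; together with invariance under the finite Weyl group and discreteness of the weight support this yields finiteness. Without such a lemma your choice of a weight with maximal classical projection is unavailable and the highest-weight argument stalls; this cannot be waved through via restriction to $G_{\bar 0}$.

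Two smaller repairs. (1) Your auxiliary choice of ``maximal $d$-eigenvalue among weights with that projection'' need not exist --- in any loop module the $\d$-string through a weight is infinite in both directions --- but it is also unnecessary: maximality of the classical projection alone kills every raising operator $x_\a\otimes t^k$, even or odd, for all $k\in\Z$; one then still must prove that $\U(T_0)v$ is a \emph{simple} $T_0$-module (by the PBW/weight argument of Claim 1 in Section \ref{sect:main}) before invoking the classification of graded simple $T$-modules as $L_r$, which presupposes simplicity. (2) Remark \ref{ideal} guarantees only that $P(t)$ has nonzero roots, not distinct ones; passing from $\prod_i(t-a_i)^{b_i}$ to the radical $\prod_i(t-a_i)$, i.e.\ showing the nilpotent ideal $\fg\otimes I'/I$ of $\fg\otimes L/I$ annihilates the irreducible integrable quotient, is a separate (standard) step, carried out in \cite[Proposition 5.2]{rz04} and used in Section \ref{sect:main} of this paper. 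With the bounding lemma supplied in place of the complete-reducibility shortcut, and these two adjustments, your outline does match the proofs in the cited references.
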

The proof of the theorem was given in \cite{ch86, cp86} when $\fg$ is a simple Lie algebra,
and in \cite{rz04} when $\fg$ is a basic classical Lie superalgebra.
\begin{remark}\label{semi}
Similar arguments as those in \cite{ch86, cp86, rz04} can show that Theorem
\ref{thm:ev} still holds for semi-simple Lie algebras.
\end{remark}

In the remainder of the paper, we classify the irreducible zero-level integrable modules
with finite dimensional weight spaces for the affine superalgebras $\hat{A}(m,n)$ ($m\ne n$) and $\hat{C}(m)$. This requires algebraic methods quite different from those used in \cite{ch86, cp86} and  \cite{rz04}.

\section{Highest weight modules for $\hat{A}(m,n)$ and $\hat{C}(m)$}\label{sect:hw-mod}
We will show in this section that irreducible zero-level integrable modules with finite dimensional weight spaces for the affine superalgebras
$\hat{A}(m,n)$ ($m\ne n$) and $\hat{C}(m)$ must be highest weight modules with respect to the triangular decomposition \eqref{eq:triangular}, see Theorem \ref{thm:hw}
for the precise statement.

Detailed structures of  the underlying
finite dimensional simple Lie superalgebras $A(m,n)$ and $C(m)$ will be
required, which we describe below.

\subsection{Lie superalgebras $\mathfrak{sl}(m,n)$ and $C(m)$}
Recall that $A(m, n)$ is
$\mathfrak{sl}(m+1, n+1)$ if $m\ne n$, and is $\mathfrak{sl}(n+1|n+1)/\C  I_{2n+2}$ if $m=n$. Also
$C(m)=\mathfrak{osp}(2|2m-2)$.
To simplify notation, we consider $\mathfrak{sl}(m, n)$ instead of $A(m, n)$ in this section.
\subsubsection{The Lie superalgebra $\mathfrak{sl}(m,n)$}
Let $V=V_{\bar {0}}\oplus V_{\bar{1}}$ be a $\Z_2$-graded vector space with ${\rm dim}V_{\bar{0}}=m$ and ${\rm dim}V_{\bar{1}}=n$.
Then the space of $\C$-linear endomorphisms ${\rm End }(V)$ on $V$ is also $\Z_2$-graded,
${\rm End }(V)=({\rm End }(V))_{\bar{0}}\oplus ({\rm End }(V))_{\bar{1}}$, with
$$({\rm End }(V))_{j}=\{f\in {\rm End }(V)|\,f(V_k)\subset V_{k+j} \mbox{ for all }k\in\Z_2\}.$$
The general linear Lie superalgebra $\mathfrak{gl}(m, n)$ is ${\rm End }(V)$ endowed with the following Lie super bracket
$$ [f,g]=f\cdot g-(-1)^{ij}f\cdot g,\quad f\in ({\rm End }(V))_{i},\  g\in ({\rm End }(V))_{j}.$$
By fixing bases for $V_{\bar{0}}$ and $V_{\bar{1}}$ we can write $X\in {\rm End }(V)$ as
$X=\begin{pmatrix}
A&B\\ C&D
\end{pmatrix},$
where $A$ is an $m\times m$ matrix, $B$ is an $m\times n$ matrix,
$C$ is an $n\times m$ matrix and $D$ is an $n\times n$ matrix.
Then $\begin{pmatrix}
A&0\\ 0&D
\end{pmatrix}$ is even and $\begin{pmatrix}
0&B\\ C&0
\end{pmatrix}$ is odd.
Denote by $E_{a b}$ the
$(m+n)\times(m+n)$-matrix unit, which has zero entries everywhere
except at the $(a, b)$ position where the entry is $1$. Then  ${\mathfrak{gl}}(m,n)$ has the
homogeneous basis $\{E_{a b}\, |\,1\le a,b\le m+n\}$  with $E_{a b}$
being even if $1\le a, b\le m$, or $m+1\le a, b\le m+n$, and odd
otherwise.
Let $\es_a$ ($a=1, 2, \dots, m+n$) be elements in the dual space of  $\tilde\fh:=\sum_{a=1}^{m+n} \C E_{ a a}$
such that $\epsilon_a(E_{b b})=\delta_{a b}$. There exists a non-degenerate bilinear form $(\cdot,\cdot):\tilde\fh^* \times \tilde\fh^*\mapsto \C$ such that  $(\es_a,\es_b)=(-1)^{[a]}\delta_{a b}$ where $[a]=0$ if
$a\le m$ and $1$ if $a>m$.

The special linear Lie superalgebra $\mathfrak{sl}(m,n)$ is the Lie sub-superalgebra of $\mathfrak{gl}(m, n)$ consisting of
elements  $X\in\mathfrak{gl}(m, n)$ such that ${\rm str}X=0$, where  the supertrace is defined for any
$X=\begin{pmatrix}
A&B\\ C&D
\end{pmatrix}$
by ${\rm str}X={\rm tr}A-{\rm tr}D$.
It is well known that $\fg=\mathfrak{sl}(m, n)$ is simple if $m\ne n$. However, if $m=n$, the identity matrix
belongs to $\fg$, which clearly spans an ideal.

Let $\fg_{0}=\left\{\begin{pmatrix}
A&0\\ 0&D
\end{pmatrix} \right\}$,
$\fg_{+1}=\left\{\begin{pmatrix}
0&B\\ 0&0
\end{pmatrix}\right\}$ and
$\fg_{-1}=\left\{\begin{pmatrix}
0&0\\ C&0
\end{pmatrix}\right\}
$.
Then $\fg_{\bar{0}}=\fg_0$ and $\fg_{\bar{1}}=\fg_{-1}\oplus\fg_{+1}.$ Note that $\fg_0$ is reductive and $\fg_0\cong\mathfrak{sl}(m)
\oplus\C z\oplus\mathfrak{sl}(n)$, where $\C z$ is the center of $\fg_{0}$. Also, $\fg$ admits a $\Z$-grading
$\fg=\fg_{-1} \oplus \fg_{0}\oplus \fg_{+1}$ with $\fg_{\pm1}$ satisfying
$[\fg_{+1}, \fg_{+1}]=0=[\fg_{-1}, \fg_{-1}]$.

Let $\mathfrak{h}$ be the standard Cartan subalgebra
consisting of the diagonal matrices in $\fg$. Denote $\d_j=\es_{m+j}$ for $1\le j\le n.$
The sets of  the positive even roots,  positive odd roots and simple roots are respectively given by
  \begin{eqnarray*}
  &&\D_{\bar{0}}^+=\{\es_i-\es_{j},\,\d_k-\d_l|\,1\le i<j\le m,\,1\le k<l\le n\},\\
  &&\D_{\bar{1}}^+=\{\es_i-\d_j|\, 1\le i\le m,\,1\le j\le n\},\\
  &&\Pi=\{\es_1-\es_2,\cdots,\es_{m-1}-\es_{m},\es_{m}-\d_1,\d_1-\d_2,\cdots,\d_{n-1}-\d_{n}\},
  \end{eqnarray*}
where $\Pi$ forms a basis of $\fh^*$. Set $\D^+=\D_{\bar{0}}^+\bigcup \D_{\bar{1}}^+$ and
$\D=\D^+\bigcup (-\D^+)$.  The root system can be encoded in the Dynkin diagram

\begin{center}
\begin{picture}(160, 20)(0, 0)
\put(10, 10){\circle{10}}
\put(15, 10){\line(1, 0){10}}
\put(25, 9){...}
\put(35, 10){\line(1, 0){10}}
\put(50, 10){\circle{10}}
\put(55, 10){\line(1, 0){20}}
{ \color{gray} \put(75, 10){\circle*{10}} }
\put(80, 10){\line(1, 0){20}}
\put(105, 10){\circle{10}}
\put(110, 10){\line(1, 0){10}}
\put(120, 9){...}
\put(130, 10){\line(1, 0){10}}
\put(145, 10){\circle{10}}
\put(151, 8){,}
\end{picture}
\end{center}
where the grey node corresponds to the odd simple root.

For any root $\a\in \D$,
we denote by $\fg_\a$ the corresponding root space.  Let $\mathfrak{n}_{\bar{i}}^\pm=\bigoplus_{\pm\a\in \D^+_{\bar{i}}}\fg_\a$ ($i=0, 1$) and $\mathfrak{n}^\pm=\mathfrak{n}_{\bar{0}}^\pm\oplus \mathfrak{n}_{\bar{1}}^\pm$,
then $\fg=\mathfrak{n}^-\oplus \mathfrak{h}\oplus \mathfrak{n}^+$. Note that
$\fn_{\bar 1}^{\pm}=\fg_{\pm 1}$.

\subsubsection{The Lie superalgebra $C(m)$}
The structure of $\fg=C(m)$ can be understood by regarding it as a Lie subalgebra of $\mathfrak{sl}(2,2m-2)$
that preserves a non-degenerate supersymmetric bilinear form.
To describe the root system of $\fg$,
we let $\mathfrak{h}$ be the Cartan subalgebra of $\fg$ contained in the distinguished Borel subalgebra.
Then $\fh^*$ has a basis
$\epsilon, \delta_1, \dots, \delta_{m-1}$ and is equipped with the bilinear form $(\cdot,\cdot):\mathfrak{h}^* \times \mathfrak{h}^*\mapsto \C$ such that $(\es,\es)=1$, $(\d_k,\d_l)=-\d_{k l}$ and  $(\es,\d_k)=0.$ The sets of  positive even roots,
positive odd roots and simple roots are respectively given by
  \begin{eqnarray*}
  &&\D_{\bar{0}}^+=\{\d_i-\d_j,\,2\d_1,\,2\d_j|\,1\le i<j\le m-1\},\\
  &&\D_{\bar{1}}^+=\{\es\pm \d_j|\, 1\le j\le m-1\},\\
  &&\Pi=\{\es-\d_1,\d_{1}-\d_{2},\d_{m-2}-\d_{m-1},2\d_{m-1}\}.
  \end{eqnarray*}
We denote $\D^+=\D_{\bar{0}}^+\bigcup \D_{\bar{1}}^+$ and $\D=\D^+\bigcup (-\D^+)$.   The Dynkin diagram of the root system is given by

\begin{center}
\begin{picture}(180, 20)(0, 0)
{\color{gray} \put(10, 10){\circle*{10}} }
\put(15, 10){\line(1, 0){20}}
\put(40, 10){\circle{10}}
\put(45, 10){\line(1, 0){10}}
\put(55, 9){...}
\put(65, 10){\line(1, 0){10}}
\put(80, 10){\circle{10}}
\put(85, 10){\line(1, 0){20}}
\put(110, 10){\circle{10}}
\put(115, 9){\line(1, 0){20}}
\put(115, 11){\line(1, 0){20}}
\put(115, 7){$<$}
\put(140, 10){\circle{10}}
\put(146, 8){.}
\end{picture}
\end{center}

We have $\fg=\mathfrak{n}^-\oplus \mathfrak{h}\oplus \mathfrak{n}^+$,
where $\mathfrak{n}^\pm=\mathfrak{n}_{\bar{0}}^\pm\oplus \mathfrak{n}_{\bar{1}}^\pm$
with $\mathfrak{n}_{\bar{i}}^\pm=\bigoplus_{\pm\a\in \D^+_{\bar{i}}}\fg_\a$ ($i=0, 1$).
The even subalgebra $\fg_{\bar{0}}=\mathfrak{n}_{\bar{0}}^-\oplus\fh\oplus\mathfrak{n}_{\bar{0}}^+$
is $\C z\oplus \mathfrak{sp}_{2m-2}$,  where $\C z$ is the center of $\fg_{\bar0}$.
From the root system one immediately sees that
$\fg$ admits a $\Z$-grading $\fg=\fg_{-1} \oplus \fg_{0}\oplus \fg_{+1}$
with $\fg_0=\fg_{\bar0}$ and $\fg_{\pm1}=\mathfrak{n}_{\bar{1}}^\pm$.

It is known that $C(m)\cong\mathfrak{sl}(2,1)$ if $m=2$. Thus we may assume that $m\ge 3$.

\subsection{Highest weight modules} Let $G$ be the affine superalgebra associated with
$C(m)$ or $\mathfrak{sl}(m,n)$ with $m\ne n$.
We retain notation of Section \ref{sect:ev-mod}, and set $G^{\pm }_{\bar{0}}=\mathfrak{n}_{\bar{0}}^{\pm }\otimes L$.
Define $\d\in H^*$ by setting $\d|_{\mathfrak{h}\oplus \C c}=0$ and $\d(d)=1.$
We write  $\l\le \mu$ for  $\l, \mu\in H^*$
if $(\mu-\l)|_\mathfrak{h}=\sum k_i\a_i$ with $k_i$ non-negative integers and $\a_i\in \Pi$.

Let $V$ be an irreducible zero-level integrable module for $G$ with finite dimensional weight spaces.
In this subsection we will show that $V$ has to be a highest weight module with respect to the triangular decomposition \eqref{eq:triangular} of $G$. This  will be  done in detail for $\widehat{sl}(m, n)$ only as the proof for $\hat{C}(m)$ is similar.

By the definition of integrable $G$-modules, $V$ is integrable over the even subalgebra $G_{\bar0}$ of $G$. It follows from
Chari's work \cite{ch86} that there is a non-zero weight vector $v\in V$ such that $G^+_{\bar{0}}v=0$.
Denote by $wt(v)$ the weight of $v$. Let $X$ be the subspace of $V$ spanned by the vectors
$E_{m,m+1}(k)E_{m,m+1}(-k)v$ for all $k\ge 0$, which is a subspace of $V_{wt(v)+2(\epsilon_m-\delta_1)}$,
thus $\dim X<\infty$. Therefore, there exists a finite positive integer $N$ such that
\[
X=span\{E_{m,m+1}(k)E_{m,m+1}(-k)v \mid 0< k<N\}.
\]
Thus  for any $r\in\Z$ we have
\begin{equation}\label{N}
E_{m,m+1}(r)E_{m,m+1}(-r)v=\sum\limits_{0< k<N}a^{(r)}_k E_{m,m+1}(k)E_{m,m+1}(-k)v,\quad a^{(r)}_k\in\C.
\end{equation}

Note that the elements $E_{m, m+1}(k)$ for all $k\in\Z$ anti-commute among themselves and satisfy
$E_{m, m+1}(k)^2=0$. Thus equation \eqref{mm+1-} in the lemma below immediately follows from \eqref{N}.
\begin{lemma}\label{lem:v-vector}
Let $V$ be an irreducible zero-level integrable $G$-module,  and
let $v\in V$ be a non-zero weight vector such that $G^+_{\bar{0}}v=0$.
Then the following relations hold for large  $k$:
\begin{equation}\label{mm+1-}
E_{m,m+1}(n_1)E_{m,m+1}(-n_1)\cdots E_{m,m+1}(n_k)E_{m,m+1}(-n_k)v=0, \quad \forall n_1,\dots,n_k\in\Z;
\end{equation}
\begin{equation}\label{mm+1}
E_{1,m+n}(m_1)\cdots E_{1,m+n}(m_k)v=0, \quad \forall m_1, \dots, m_k\in\Z.
\end{equation}
\end{lemma}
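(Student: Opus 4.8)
The plan is to treat the two displayed identities separately, the second being the substantive one.

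For \eqref{mm+1-} I would argue just as the text indicates. Since $\es_m-\d_1$ is an odd isotropic root, the bracket $[E_{m,m+1}(j),E_{m,m+1}(l)]$ in $G$ has vanishing Lie part ($E_{m,m+1}^2=0$) and vanishing central part ($(E_{m,m+1}\,|\,E_{m,m+1})=0$); hence the loop elements $x_j:=E_{m,m+1}(j)$ pairwise anticommute and satisfy $x_j^2=0$. Consequently the even operators $P_r:=x_rx_{-r}$ pairwise commute and satisfy $P_r^2=0$, while \eqref{N} says $P_rv$ lies in the finite-dimensional space $X=\mathrm{span}\{P_lv\mid 0<l<N\}$. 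Bringing any factor $P_{n_j}$ to the right by commutativity, replacing $P_{n_j}v$ by a combination of $P_lv$ with $0<l<N$ via \eqref{N}, and iterating, every product $P_{n_1}\cdots P_{n_k}v$ becomes a combination of products $P_{l_1}\cdots P_{l_k}v$ with all $l_i\in\{1,\dots,N-1\}$; as the $P_l$ commute and square to zero, such a product vanishes unless the $l_i$ are distinct, which is impossible once $k\ge N$. This yields \eqref{mm+1-}.

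The key to \eqref{mm+1} is that $\theta:=\es_1-\d_n$ is the highest root of $\fg_{+1}$, i.e. the $\fg_0$-highest weight vector of $\fg_{+1}$: one checks directly that $E_{ij}E_{1,m+n}=E_{1,m+n}E_{ij}=0$ and $E_{m+k,m+l}E_{1,m+n}=E_{1,m+n}E_{m+k,m+l}=0$ for the relevant index ranges, so $[\mathfrak{n}_{\bar0}^+,E_{1,m+n}]=0$ in $\fg$ and hence $[\mathfrak{n}_{\bar0}^+(0),E_{1,m+n}(m_i)]=0$ in $G$. Therefore $\Phi_k:=E_{1,m+n}(m_1)\cdots E_{1,m+n}(m_k)v$ is annihilated by $\mathfrak{n}_{\bar0}^+$, the degree-zero raising operators commuting with every factor and killing $v$. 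Thus, if $\Phi_k\ne0$, it is a $\fg_{\bar0}$-highest weight vector of $\fh$-weight $\lambda_k:=wt(v)|_\fh+k\theta$, which is dominant integral since $\theta$ is (one computes $\langle\theta,\alpha^\vee\rangle\ge0$ for every even positive root $\alpha$).

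I would then exploit integrability together with finiteness of weight spaces. As $V$ is integrable over the finite reductive algebra $\fg_{\bar0}$ (the degree-zero part), its semisimple part acts locally finitely, so $V|_{\fg_{\bar0}}=\bigoplus_\lambda V_{\fg_{\bar0}}(\lambda)^{\oplus m_\lambda}$ with $\dim V_\mu=\sum_\lambda m_\lambda\dim V_{\fg_{\bar0}}(\lambda)_\mu<\infty$ for every $\mu$; a nonzero $\Phi_k$ forces $m_{\lambda_k}\ge1$. The point is that the $\lambda_k$ grow without bound in the dominant direction $\theta$. Fixing a residue $r$ modulo the order of $[\theta]$ in the finite group $P/Q$ ($P,Q$ the weight and root lattices of $\fg_{\bar0}$), for all large $k\equiv r$ one has $\lambda_r\in(\lambda_k+Q)\cap\mathrm{conv}(W\lambda_k)$, so $\lambda_r$ is a weight of every such $V_{\fg_{\bar0}}(\lambda_k)$; hence $\dim V_{\lambda_r}\ge\sum_{k\equiv r\text{ large}}m_{\lambda_k}$, and finiteness forces $m_{\lambda_k}=0$ for all large $k\equiv r$. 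Running over the finitely many residues gives $\Phi_k=0$ for large $k$, which is \eqref{mm+1}. The case $\hat C(m)$ is identical, with $\theta$ the highest odd root $\es+\d_1$ of $\fg_{+1}$.

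The main obstacle I anticipate is the final step, namely converting ``the highest weights $\lambda_k$ grow unboundedly'' into genuine vanishing. This rests on combining three ingredients—complete reducibility of $V|_{\fg_{\bar0}}$ into finite-dimensional constituents, finiteness of the weight spaces, and the saturation of the weight set of each $V_{\fg_{\bar0}}(\lambda_k)$, which guarantees that a fixed weight $\lambda_r$ stays in its support as $k\to\infty$. Extra care is needed because $\theta\notin Q$, so the classes $[\lambda_k]$ in $P/Q$ are periodic rather than constant; this is exactly why the residue-by-residue bookkeeping is unavoidable.
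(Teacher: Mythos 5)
Your treatment of \eqref{mm+1-} is correct and is essentially the paper's own argument: the paper derives \eqref{mm+1-} directly from \eqref{N} using that the $E_{m,m+1}(j)$ anticommute and square to zero, and your bookkeeping with the commuting, square-zero operators $P_r=x_rx_{-r}$ is just a careful version of that. Note that the pairing $(n_i,-n_i)$ is what makes \eqref{N} available at all: it keeps every vector $E_{m,m+1}(r)E_{m,m+1}(-r)v$ in the \emph{single} weight space $V_{wt(v)+2(\es_m-\d_1)}$ with respect to $\fh\oplus\C c\oplus\C d$, which is where finite-dimensionality is assumed.

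Your proof of \eqref{mm+1}, however, takes a genuinely different route from the paper and has a genuine gap at the final counting step, in two ways. First, the weight spaces of $V$ are finite-dimensional only with respect to $H=\fh\oplus\C c\oplus\C d$, not with respect to $\fh$: the $\fh$-weight spaces are typically infinite-dimensional (already for a loop module $V(\psi)\otimes L$ each $\fh$-weight space contains all its $\d$-shifts). Your vectors $\Phi_k$ have $d$-eigenvalue $wt(v)(d)+\sum_i m_i$, which varies uncontrollably with the tuples $(m_1,\dots,m_k)$ witnessing $\Phi_k\ne 0$, so the constituents $V_{\fg_{\bar0}}(\lambda_k)$ sit in pairwise unrelated $d$-eigenspaces and never accumulate in one finite-dimensional $H$-weight space; the inequality $\dim V_{\lambda_r}\ge\sum_{k}m_{\lambda_k}$ invokes finiteness of an $\fh$-weight space, which is not a hypothesis. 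Second, and more fundamentally, $\fg_{\bar 0}$ has the one-dimensional center $\C z$ and $\theta(z)\ne 0$ (the center acts on $\fg_{+1}$ by a nonzero scalar for these type I superalgebras). Every even root vanishes on $z$, so $\lambda_k-\lambda_r=(k-r)\theta$ is \emph{never} in the root lattice of $\fg_{\bar0}$ for $k\ne r$: the class $[\theta]$ has finite order only in $P/Q$ of the semisimple part $\fg_{ss}$, and after projecting to $\fh_{ss}^*$ the ``common weight'' vectors inside the various $V_{\fg_{\bar0}}(\lambda_k)$ carry pairwise distinct $z$-eigenvalues $\mu_0(z)+k\theta(z)$, hence lie in pairwise distinct $\fh$-weight spaces. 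So even the $\fh$-level counting produces no contradiction. This is precisely the obstruction the paper emphasizes in the introduction: growth along the odd direction $\theta$ has a central component that is invisible to integrability over $\fg_{ss}$, which is why Chari-type weight-counting fails for $A(m,n)$ ($m\ne n$) and $C(m)$. The paper avoids all counting here: it deduces \eqref{mm+1} algebraically from \eqref{mm+1-} by applying the even raising operators $(E_{1m}(s)E_{m+1,m+n}(s))^{2k}\in \U(G^+_{\bar0})$ to the already-vanishing vectors; since these operators kill $v$ and each factor $E_{m,m+1}(\pm n_i)$ can absorb exactly one of each type, the relation \eqref{mm+1-} is converted into relations among products of $E_{1,m+n}$'s, and a parity argument (two of any three integers have equal parity, plus anticommutativity) yields \eqref{mm+1}. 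Where the paper does use weight-space finiteness in a Chari-like way is later, in Proposition \ref{prop:key}, and there it works with the full $H$-weights of the form \eqref{eq:is-a-weight} and an $\mathfrak{sl}(2)$-string argument along the \emph{even} root $\es_{m-1}-\es_m$, sidestepping both difficulties above.
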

\begin{proof} Since \eqref{mm+1-} was proven already, we only need to consider  \eqref{mm+1}.
Assume that \eqref{mm+1-} holds for some $k$.
If $k=1$, by applying  $(E_{1 m}(s)E_{m+1,m+n}(s))^2$ to \eqref{mm+1-}, we have
$$E_{1,m+n}(2s+n_1)E_{1,m+n}(2s-n_1)v=0.$$
By setting $p=2s+n_1,\,q=2s-n_1$, we obtain
$$E_{1,m+n}(p)E_{1,m+n}(q)v=0\quad \mbox{ for all }p,q\mbox{ with } p\equiv q~({\rm mod}~2).$$
Since two of any three integers $p,q,a$ must have the same parity, we have
\[
E_{1,m+n}(p)E_{1,m+n}(q)E_{1,m+n}(a)v=0\quad \mbox{ for all }p,q,a\in\Z,
\]
by noting that the elements $E_{1,m+n}(r)$ anti-commute for all $r$.

For $k>1$, applying $(E_{1m}(s)E_{m+1,m+n}(s))^{2k}$ to \eqref{mm+1-}, we have
 $$E_{1,m+1}(2s+n_1)E_{1,m+1}(2s-n_1)\cdots E_{1,m+n}(2s+n_k)E_{1,m+n}(2s-n_k)v=0.$$
Set $p_i=2s+n_i$ and $q_i=2s-n_i$ for $i=1.\dots,k$. Then for all $p_i,q_i\in\Z$ with $p_i\equiv q_i ~({\rm mod}~2)$, we have
$$E_{1,m+n}(p_1)E_{1,m+n}(q_1)\cdots E_{1,m+n}(p_k)E_{1,m+n}(q_k)v=0.$$
Therefore,  for all $p_i,q_i,a_i\in\Z$,
$$E_{1,m+n}(p_1)E_{1,m+n}(q_1)E_{1,m+n}(a_1)\cdots E_{1,m+n}(p_k)E_{1,m+n}(q_k)E_{1,m+n}(a_k)v=0.$$
This proves \eqref{mm+1}.
\end{proof}

By using the lemma, we can prove the following result.
\begin{proposition}\label{prop:key}
Let $V$ be an irreducible zero-level integrable $G$-module with finite dimensional weight spaces.
Then there always exists a nonzero weight vector $w\in V$ such that
\begin{eqnarray}
\label{semi-positive}
&&G^+_{\bar{0}}w=0,\\
\label{semi-positive1}
&&E_{m-i,m+j}(r)w=0, \  (i,j)\ne (0,1), \  r\in\Z, \\
\label{semi-positive2}
&&E_{m,m+1}(n_1)\cdots E_{m,m+1}(n_k)w=0 \quad \text{for large $k$, for all $n_i\in\Z$}.
\end{eqnarray}
\end{proposition}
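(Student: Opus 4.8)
The plan is to produce the vector $w$ by starting from the vector $v$ supplied by Chari's theorem (satisfying $G^+_{\bar0}v=0$) and Lemma \ref{lem:v-vector}, and then iteratively ``lowering'' it using the odd root vectors to kill all the off-diagonal odd generators $E_{m-i,m+j}(r)$ except the extremal one $E_{m,m+1}(r)$. The key structural observation is that $\fg_{+1}=\fn_{\bar1}^+$ is spanned by the $E_{a,m+j}(r)$ with $1\le a\le m$, $1\le j\le n$, and that these anti-commute and square to zero. The even part $G^+_{\bar0}$ acts on these odd generators, and the whole space $\fg_{+1}\otimes L$ is an integrable $G_{\bar0}$-module under the adjoint action. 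Conditions \eqref{semi-positive}--\eqref{semi-positive2} single out a vector annihilated by all of $G^+_{\bar0}$ and by all positive odd generators except the simple one $E_{m,m+1}=E_{\es_m-\d_1}$.

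First I would set up the right partial order: among the odd positive roots $\es_i-\d_j$, declare $E_{m,m+1}$ (corresponding to the simple odd root $\es_m-\d_1$) to be the extremal one, and all other $E_{m-i,m+j}$ with $(i,j)\ne(0,1)$ to be ``higher'' in the sense that each can be obtained from $E_{m,m+1}$ by applying a raising operator from $\fg_{\bar0}^+$. Concretely, $E_{m-i,m+j}=[\,\cdot\,,E_{m,m+1}]$ up to signs using even generators $E_{m-i,m}\in\fn^-_{\bar0}$ and $E_{m+1,m+j}\in\fn^+_{\bar0}$. Starting from the subspace of vectors killed by $G^+_{\bar0}$, I would apply the even \emph{lowering} and \emph{raising} operators to trade the action of a given $E_{m-i,m+j}(r)$ for that of $E_{m,m+1}(r')$, so that the conditions \eqref{mm+1-} and \eqref{mm+1} of Lemma \ref{lem:v-vector}, which already control arbitrary products of $E_{m,m+1}$ and of $E_{1,m+n}$ (the two extreme odd root vectors), can be propagated across all intermediate odd root vectors.

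The engine of the argument will be finite-dimensionality of weight spaces combined with integrability: each $E_{m-i,m+j}(r)$ is an odd nilpotent ($E^2=0$), and the span of vectors $E_{m-i,m+j}(r)w$ over all $r\in\Z$ lives in a single finite-dimensional weight space, so only finitely many are linearly independent, exactly as in the passage from \eqref{N} to \eqref{mm+1-}. I would then run an induction over the odd positive roots, ordered by height, replacing $v$ by a successively modified vector that is annihilated by more and more of the $E_{m-i,m+j}(r)$; at each stage I use $\mathfrak{sl}_2$-integrability of the relevant even root string to guarantee the modified vector is still nonzero and still killed by $G^+_{\bar0}$. The two boundary cases \eqref{mm+1-} (the extremal root $\es_m-\d_1$) and \eqref{mm+1} (the extremal root $\es_1-\d_n$) from Lemma \ref{lem:v-vector} serve as the base and the far end of this induction.

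The hard part will be the bookkeeping of the induction step: showing that after replacing the candidate vector by one annihilated by a given family $E_{m-i,m+j}(r)$ one does not destroy the vanishing already established for the other odd generators, and that the result remains nonzero. The delicate point is that the odd generators anti-commute but do \emph{not} commute with the even raising/lowering operators used to move between them, so I expect to need careful (super)commutator identities of the form $[E_{m+1,m+j}(s),E_{m,m+1}(r)]=\pm E_{m,m+j}(r+s)$ to convert a vanishing statement for products of $E_{m,m+1}$ into one for products of $E_{m-i,m+j}$, while simultaneously tracking that the $G^+_{\bar0}$-highest-weight property is preserved. Keeping conditions \eqref{semi-positive}, \eqref{semi-positive1}, \eqref{semi-positive2} simultaneously valid throughout the induction, rather than one at a time, is where the real care is required.
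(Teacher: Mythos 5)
Your plan for \eqref{semi-positive} and \eqref{semi-positive1} is essentially the route the paper takes: starting from Lemma \ref{lem:v-vector}, one repeatedly takes a product of odd root vectors of \emph{minimal} length annihilating the current vector (minimality is what makes the shorter product nonzero --- no $\mathfrak{sl}(2)$-string argument is needed for nonvanishing), and converts vanishing statements from one odd root vector to another by applying even operators such as $(E_{1m}(s)E_{m+1,m+n-1}(s))^{2k}$. The bookkeeping you flag as the hard part is resolved in the paper by processing the odd roots in \emph{decreasing} order, beginning with the highest odd root $\es_1-\d_n$ (i.e.\ $E_{1,m+n}$): since $[\fg_{+1},\fg_{+1}]=0$, applying a lower odd root vector preserves annihilation by the higher ones already killed, and $\mathrm{ad}(G^+_{\bar 0})$ sends each odd root vector into the span of strictly higher odd root vectors, so the $G^+_{\bar 0}$-annihilation survives each step. (Incidentally, $E_{m-i,m}$ lies in $\fn^+_{\bar 0}$, not $\fn^-_{\bar 0}$.)

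The genuine gap is \eqref{semi-positive2}. Your proposed engine --- odd nilpotency plus the claim that the vectors $E_{m-i,m+j}(r)w$, $r\in\Z$, lie in a single finite-dimensional weight space --- is false as stated and cannot prove it: $E_{m,m+1}(n_1)\cdots E_{m,m+1}(n_k)w$ has weight $\mu+k(\es_m-\d_1)+(\sum_i n_i)\d$, so even for fixed $k$ these vectors spread over infinitely many weight spaces as $\sum_i n_i$ varies; only the paired combinations $E_{m,m+1}(r)E_{m,m+1}(-r)v$ of \eqref{N} live in one weight space, which is exactly why Lemma \ref{lem:v-vector} yields only the paired relations \eqref{mm+1-}, with the $t$-degrees cancelling pair by pair, and no parity or anticommutation trick upgrades those to arbitrary unpaired products. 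The paper proves \eqref{semi-positive2} by a different mechanism, which moreover requires \eqref{semi-positive} and \eqref{semi-positive1} as input: since $V$ is irreducible it is cyclic on $w$, and PBW together with \eqref{semi-positive0} confines every weight of $V$ above $\mu$ to the cone \eqref{eq:is-a-weight}, i.e.\ $\mu+a(\es_m-\d_1)+b\d$ with $a\ge 0$. If arbitrarily long products $E_{m,m+1}(n_1)\cdots E_{m,m+1}(n_k)w$ were nonzero, their weight $\nu$ would satisfy $(\nu,\es_{m-1}-\es_m)<0$ for large $k$, and integrability of the $\mathfrak{sl}(2)$ attached to $\es_{m-1}-\es_m$ would make $\nu+(\es_{m-1}-\es_m)$ a weight of $V$, which does not lie in the cone \eqref{eq:is-a-weight} --- a contradiction. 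This cone-plus-$\mathfrak{sl}(2)$ argument is absent from your proposal, and without it \eqref{semi-positive2} does not follow.
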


\begin{proof}
Let $v$ be the vector in Lemma \ref{lem:v-vector} with
$l$ bing the minimal value of $k$  such that the equation \eqref{mm+1} holds. Then there exist $r_1,\dots,r_{l-1}\in\Z$ such that
\begin{eqnarray*}
&&v_{1,m+n}:=E_{1,m+n}(r_1)\cdots E_{1,m+n}(r_{l-1})v\ne 0,\\
&& E_{1,m+n}(r) v_{1,m+n}=0, \quad \forall r\in \Z.
                  \end{eqnarray*}
Since for any $Y\in G_{\bar0}^+$, we have $[Y, E_{1,m+n}(r) ]=0$ for all $r$, and hence
\[
G^+_{\bar{0}}v_{1,m+n}=0.
\]
We observe that \eqref{mm+1-} still holds if we replace $v$ by $v_{1,m+n}$,   namely,  for large $k$ and for all $n_1,\dots,n_k\in\Z$,
\begin{equation}\label{1m+n}
E_{m,m+1}(n_1)E_{m,m+1}(-n_1)\cdots E_{m,m+1}(n_k)E_{m,m+1}(-n_k)v_{1,m+n}=0.
\end{equation}
Applying $(E_{1 m}(s)E_{m+1,m+n-1}(s))^{2k},\,s\in\Z$, to \eqref{1m+n} and using the same arguments for the proof of \eqref{mm+1},
we obtain
\begin{equation}\label{1m+n-1}
E_{1,m+n-1}(m_1)\cdots E_{1,m+n-1}(m_k)v_{1,m+n}=0\quad \mbox{ for all } m_1,\dots,m_k\in\Z.
\end{equation}
Let $l'$ be the minimal integer such that \eqref{1m+n-1} holds. Then there exist $r_1',\dots,r_{l'-1}'\in\Z$ such that
\begin{eqnarray*}&&v_{1,m+n-1}:=E_{1,m+n-1}(r_1')\cdots E_{1,m+n-1}(r_{l'-1}')v_{1,m+n}\ne 0,\\
 &&E_{1,m+n-1}(r)v_{1,m+n-1}=0, \quad \forall r\in\Z.
 \end{eqnarray*}
Therefore,
$G^+_{\bar{0}}v_{1,m+n-1}=0.$
Repeating the above arguments for a finite number of times, we will find a weight vector $w$ such that
\begin{equation}\label{semi-positive0}
G^+_{\bar{0}}w=0,\quad E_{i,m+j}(r)w=0, \,(i,j)\ne (m,1),\,  r\in\Z.
\end{equation}

Let $\mu$ be the weight of $w$.  Observe that $V$, being irreducible, must be cyclically generated by $w$ over $G$.
By using the PBW theorem for the universal enveloping algebra of $G$ and equation \eqref{semi-positive0},
we easily show that any weight of $V$ which is bigger than $\mu$ must be of the form
\begin{eqnarray}\label{eq:is-a-weight}
\mu +a(\es_m-\d_1)+b\d, \quad a\in\Z_{\ge 0},\,b\in\Z.
\end{eqnarray}

Now we prove \eqref{semi-positive2}. Suppose it is false, that is,
for any positive integer $p$,
there always exist $k>p$ and $n_1, \dots, n_k\in \Z$ such that  $\tilde{w}:=E_{m,m+1}(n_1)\cdots E_{m,m+1}(n_k)w\ne 0$.
Then $\nu:=\mu+k(\es_m-\d_1)+\sum\limits_{i=1}^kn_i\d$ is the weight of
$\tilde{w}$. But for large $p$, and hence large $k$, we have $(\nu,\, \es_{m-1}-\es_m)<0$. Thus $\nu+(\es_{m-1}-\es_m)$ is a weight of $V$ by considering the action of the $\mathfrak{sl}(2)$ subalgebra generated by the root spaces  $\fg_\alpha$ and  $\fg_{-\alpha}$ where $\alpha=\es_{m-1}-\es_m$.
However, the weight $\nu+(\es_{m-1}-\es_m)$ is not of the form \eqref{eq:is-a-weight}. This proves \eqref{semi-positive2} by contradiction.
\end{proof}

The following theorem is an easy consequence of Proposition \ref{prop:key}.
\begin{theorem}\label{h-vector}\label{thm:hw}
Let $V$ be an irreducible zero-level integrable $G$-module with finite dimensional weight spaces.
Then there exists a weight vector $v\in V$ such that $G^+ v=0$. Furthermore,
$V$ is isomorphic to the irreducible quotient $V(\varphi)$ of the induced module defined by
\eqref{eq:Verma} for some $\varphi$.
\end{theorem}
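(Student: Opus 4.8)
The plan is to deduce the statement directly from Proposition \ref{prop:key} in two stages: first upgrade the vector $w$ produced there to a genuine highest weight vector $v$ annihilated by all of $G^+$, and then identify $V$ with $\hat V(\varphi)$ via the highest weight theory recalled around \eqref{eq:Verma}. Throughout I use the decomposition $G^+=G^+_{\bar 0}\oplus(\mathfrak{n}^+_{\bar 1}\otimes L)$, where $\mathfrak{n}^+_{\bar 1}\otimes L$ is spanned by the odd generators $E_{a,m+b}(r)$. By \eqref{semi-positive1} the vector $w$ is already killed by every such generator except the $E_{m,m+1}(r)$, while \eqref{semi-positive2} guarantees that sufficiently long products of the latter annihilate $w$.

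For the first stage I would choose a nonzero product $v=E_{m,m+1}(n_1)\cdots E_{m,m+1}(n_s)w$ of maximal length $s$; such a product exists and is nonzero by \eqref{semi-positive2}, and by maximality $E_{m,m+1}(r)v=0$ for all $r$. The decisive structural input is that $\mathfrak{n}^+_{\bar 1}=\fg_{+1}$ is abelian, since $[\fg_{+1},\fg_{+1}]=0$; consequently every odd positive root vector $E_{a,m+b}(*)$ anticommutes with every $E_{m,m+1}(*)$ as an operator on $V$. Using this one verifies that $v$ inherits the remaining vanishing properties of $w$. For an excluded generator $Z=E_{m-i,m+j}(r)$ with $(i,j)\ne(0,1)$, I push $Z$ rightward through the $E_{m,m+1}$ factors by anticommutativity until it reaches $w$, where it dies by \eqref{semi-positive1}. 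For $Y\in G^+_{\bar 0}$, I move $Y$ rightward: the leading term vanishes because $Yw=0$ by \eqref{semi-positive}, while each commutator $[Y,E_{m,m+1}(*)]$ lies in $[\fg_0,\fg_{+1}]\otimes L\subseteq\mathfrak{n}^+_{\bar 1}\otimes L$ and, differing in root from $\es_m-\d_1$ by a nonzero positive even root, is either zero or an excluded generator, hence is absorbed as above. This yields $G^+v=0$.

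For the second stage I would argue as in the classical highest weight case. Since $V$ is irreducible, $V=\U(G)v=\U(G^-)\U(T)v$ by the PBW theorem together with $G^+v=0$. Setting $W=\U(T)v$, the inclusion $[G^+,T]\subseteq G^+$ forces $G^+W=0$; and because each factor of $G^-$ strictly lowers the $\fh$-weight, the weight $\mu_0:=wt(v)|_{\fh}$ is maximal among the weights of $V$ and the full $\mu_0$-isotypic space $V[\mu_0]$ equals $W$. Now let $W'\subseteq W$ be any nonzero $T$-submodule and pick $0\ne w'\in W'$. Irreducibility gives $\U(G)w'=V$, while $G^+w'=0$ gives $\U(G)w'=\U(G^-)\U(T)w'$; extracting the $\mu_0$-isotypic part of both sides yields $W=V[\mu_0]=\U(T)w'\subseteq W'$, so $W'=W$. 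Thus $W=\U(T)v$ is an irreducible $T$-module, the three defining conditions of a highest weight module are met, and by the construction recalled around \eqref{eq:Verma} we conclude $V\cong\hat V(\varphi)$ for a suitable $\varphi=(\tilde\varphi,b)$.

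I expect the first stage to be the main obstacle: the entire argument rests on checking that the maximal product $v$ still satisfies the annihilation conditions of Proposition \ref{prop:key}, and this in turn rests on the abelianness of $\fg_{+1}$ together with the fact that $[\fg_0,\fg_{+1}]$ stays inside $\fg_{+1}$ and never returns to the line through $E_{m,m+1}$. The second stage is routine once irreducibility of $\U(T)v$ is secured. Finally, the argument is uniform across the two families, the $\hat C(m)$ case being identical since there too $\fg=\fg_{-1}\oplus\fg_0\oplus\fg_{+1}$ with $\fg_{+1}$ abelian.
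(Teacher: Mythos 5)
Your proposal is correct and takes essentially the same route as the paper: the paper likewise produces $v$ as a maximal (equivalently, minimal-length-to-vanish) nonzero product of $E_{m,m+1}(r_i)$'s applied to the vector $w$ of Proposition \ref{prop:key}, deduces $G^+v=0$ from \eqref{semi-positive} and \eqref{semi-positive1} via the abelianness of $\fg_{+1}$, and establishes irreducibility of $\U(T_0)v$ by exactly your PBW/maximal-weight argument (stated as a Claim in the proof of Theorem \ref{thm:main}, to which the paper defers the identification $V\cong\hat V(\varphi)$). Your write-up simply makes explicit the anticommutation bookkeeping and the root computation for $[\fg_0,\fg_{+1}]$ that the paper compresses into one line.
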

\begin{proof} Assume that $\fg=\mathfrak{sl}(m, n)$.
Consider the weight vector $w$ of Proposition \ref{prop:key},
and let $s$ be the minimal integer such that \eqref{semi-positive2} holds.
Then there exist $r_1,\dots,r_{s-1}\in\Z$ such that
\begin{eqnarray*}
&&v:=E_{m,m+1}(r_1)\cdots E_{m,m+1}(r_{s-1})w\ne 0,\\
&& E_{m,m+1}(r) v=0, \quad \forall r\in \Z.
\end{eqnarray*}
It follows from \eqref{semi-positive} and \eqref{semi-positive1} in
Proposition \ref{prop:key}  that $G^+v=0.$

The existence of a highest weight vector can be proved similarly in the case of $C(m)$.
We omit the details, but point out that the following property of $C(m)$ plays a crucial role:
if $\alpha$ and $\beta$ are positive odd roots, then
$[\fg_\alpha, \fg_\beta]=\{0\}=[\fg_{-\alpha}, \fg_{-\beta}]$.
This property is shared by $\mathfrak{sl}(m, n)$.

The last statement of the theorem will be verified in the proof of Theorem \ref{thm:main}. See
in particular remarks below equation \eqref{varphi}.
\end{proof}

\section{Integrable modules for $\hat{A}(m,n)$  and $\hat{C}(m)$}\label{sect:new}
We use $\fg$ to denote $A(m,n)$ with $m\ne n$ or $C(m)$.
Recall that $\fg_{\bar{0}}$ is reductive but not semi-simple. Let $\fg_{ss}$ be the semi-simple part of $\fg_{\bar{0}}$
and $\C z$ be the one dimensional center of $\fg_{\bar{0}}$. Then
$
\fg_{\bar{0}}=\fg_{ss}\oplus \C z.
$
Let $\fg_{ss}=\fg_{ss}^-\oplus \mathfrak{h}_{ss}\oplus \fg_{ss}^+$ be the standard triangular decomposition.
Set
\[
L(\fg_{ss})=\fg_{ss}\otimes L, \quad T^{ss}_0=\mathfrak{h}_{ss}\otimes L\oplus \C c, \quad T^{ss}=T^{ss}_0 \oplus \C d.
\]

Fix a $K$-tuple of integral dominant $\fg_{ss}$-weights $\underline{\l}=(\l_1,\dots,\l_K)$,
and take any $\underline{a}=(a_1,\dots ,a_K)\in \C^K$ with distinct nonzero entries.
Let $\tilde\varphi: \U(T^{ss}_0)\rightarrow L$ and $\psi:\U(T^{ss}_0)\rightarrow \C$  be
algebra homomorphisms respectively defined by
\begin{eqnarray}\label{eq:phi-ss}
\begin{aligned}
\tilde\varphi: &\quad  h\otimes t^s \rightarrow(\sum a_j^s\l_j(h))t^s,\quad c\to 0, \quad h\in\mathfrak{h}_{ss}, \\
\psi: &\quad  h\otimes t^s\rightarrow \sum a_j^s\l_j(h),\quad c\to 0,\quad h\in\mathfrak{h}_{ss},
\end{aligned}
\end{eqnarray}
where $\tilde\varphi$ is $\Z$-graded.
Then $im(\tilde\varphi)\cong L_r$ for some nonnegative integer $r$, and $L_r$ is an irreducible $\U(T^{ss}_0)$-module.
By letting $d$ act on $im(\tilde\varphi)$ by $d t^{i r}=(i r+b)t^{i r}$ ($i\in\Z$) for any fixed $b\in\C$,
we make $im(\tilde\varphi)$ into an irreducible $T^{ss}$-module, which we denote by $L_\varphi$, where
$\varphi=(\tilde\varphi, b)$.

Extend $L_\varphi$ to a module over $B_{ss}:=T^{ss}\oplus \fg_{ss}^+\otimes L$
with $\fg_{ss}^+\otimes L$ acting trivially, and construct the following induced module for $L(\fg_{ss})\oplus \C c\oplus\C d$:
\[
M^0(\varphi)=\U(L(\fg_{ss})\oplus \C c\oplus \C d)\otimes_{\U(B_{ss})}L_\varphi.
\]
This has a unique irreducible quotient, which we denote by $V^0(\varphi)$. Then $V^0(\varphi)$ is an irreducible
loop module by Theorem \ref{thm:ev}.

Let $\U(T^{ss}_0)$ act on the one dimensional vector
space $\C_\psi=\C$ by $\psi$.  We extend $\C_\psi$ to a module over $B_{ss}':=T^{ss}_0\oplus \fg_{ss}^+\otimes L$
with $\fg_{ss}^+\otimes L$ acting trivially, and construct the induced module
\[
M^0(\psi)=\U(L(\fg_{ss})\oplus \C c)\otimes_{\U(B_{ss}')}\C_\psi
\]
for $L(\fg_{ss})\oplus \C c$. This module has a unique simple quotient,
which we denote by $V^0(\psi)$.
We note in particular that $V^0(\psi)$ is integrable with finite dimensional weight spaces.

Let  $I'$ be the ideal  of $L$ generated by
$P'(t)=\prod\limits_{j=1}^K(t-a_j)$. Then it follows from Remark \ref{rem:ideal} that
$(\fg_{ss}\otimes I')V^0(\psi)=0.$

Given any positive integers $b_j$ ($1\le j\le K$),  we let $P(t)=\prod\limits_{j=1}^K(t-a_j)^{b_j}$
and set $\theta=\sum_{i=1}^K b_i$.
Denote by $I$ the ideal generated by $P(t)$, which clearly is contained in $I'$.
We write $P(t)=\sum_{i=0}^\theta c_i t^i$, where the $c_i$ are complex numbers
determined by $a_j$ and $b_j$.  Let $\tau$ be any element of the set 
\begin{eqnarray}\label{eq:tau}
\mathcal{T}_I:=\{\tau\in(z\otimes L)^*\mid \tau(z\otimes I)=0\}.
\end{eqnarray}
Set 
$\tau_s=\tau(z\otimes t^s)$ for all $s\in\Z$,  and we shall also denote
$\tau$ by the sequence $(\tau_i)_{i\in\Z}$. 
We have $\sum_ {i=0}^\theta c_i\tau_{i+m}=0$ for all $m\in\Z.$
Conversely, we may regard this as a linear difference equation of order 
$\theta$ for $(\tau_i)_{i\in\Z}$. It uniquely determines 
the sequence, and hence an element $\tau\in\mathcal{T}_I$, 
by fixing, e.g., $\tau_0, \tau_1, \dots, \tau_{\theta-1}$, to any complex numbers.

One can extend $\psi$ to an algebra homomorphism $\U(T_0)\rightarrow \C$ by letting
$\psi(z\otimes t^s)=\tau_s$ for all $s\in \Z.$
Define the action of $z\otimes L$ on $V^0(\psi)$ by
\[
(z\otimes t^s)u=\tau_su,\quad  s\in \Z,\, u\in V^0(\psi).
\]
Since $[z\otimes L, L(\fg_{ss})\oplus \C c]=0$, this makes
$V^0(\psi)$ into a simple module for  $\fg_{\bar{0}}\otimes L\oplus \C c$.
It follows from \eqref{eq:tau} that $(z\otimes I)V^0(\psi)=0$.
Extend $V^0(\psi)$ to a simple module for the parabolic subalgebra 
$\hat{\mathfrak{p}}:=\fg_{\bar 0}\otimes L\oplus\C c\oplus \mathfrak{n}_{\bar 1}^{+}\otimes L$ 
by letting $\mathfrak{n}_{\bar 1}^{+}\otimes L$ act on $V^0(\psi)$ trivially, and denote the 
resulting module by $V^0(\psi, \tau)$.
Now construct the induced module for $G'=L(\fg)\oplus \C c$:
$$M(\psi,\tau)=\U(G')\otimes_{\U( \hat{\mathfrak{p}})}V^0(\psi, \tau).$$
Standard arguments show that
$M(\psi,\tau)$ has a unique quotient $V(\psi,\tau)$, which is irreducible over $G'$.

\begin{proposition} \label{prop:finite-weight}
We have $(\fg\otimes I)V(\psi,\tau)=0$. This in particular implies that $V(\psi,\tau)$
has finite dimensional weight spaces.
\end{proposition}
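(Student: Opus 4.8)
The plan is to prove the stronger statement that the subspace $\mathfrak{i}:=\fg\otimes I\oplus\C c$ annihilates $V(\psi,\tau)$; since $c$ acts by zero (the module is of zero level), this is equivalent to $(\fg\otimes I)V(\psi,\tau)=0$. First I would check that $\mathfrak{i}$ is an ideal of $G'$: as $I\unlhd L$ we have $[\fg\otimes L,\fg\otimes I]\subseteq\fg\otimes I+\C c$, and $[\fg\otimes L,\C c]=0$. Consequently $W:=\{v\in V(\psi,\tau)\mid\mathfrak{i}v=0\}$ is a $G'$-submodule, so by irreducibility of $V(\psi,\tau)$ it suffices to exhibit one nonzero vector in $W$. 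I would also record what the construction gives on the top piece $V^0(\psi,\tau)\subseteq V(\psi,\tau)$: since $I\subseteq I'$ we get $(\fg_{ss}\otimes I)V^0(\psi)=0$, the choice $\tau\in\mathcal{T}_I$ gives $(z\otimes I)V^0(\psi)=0$, and $\mathfrak{n}_{\bar1}^+\otimes L=\fg_{+1}\otimes L$ acts trivially by construction. As $\fg_{\bar0}=\fg_{ss}\oplus\C z$, this yields $(\fg_{\bar0}\otimes I)V^0(\psi,\tau)=0$ and $(\fg_{+1}\otimes L)V^0(\psi,\tau)=0$; the only part of $\mathfrak{i}$ not already seen to kill the top piece is $\fg_{-1}\otimes I$.

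The main obstacle is exactly this last part: $\fg_{-1}\otimes I$ does \emph{not} kill the top piece inside $M(\psi,\tau)$, and one must show it does so in the irreducible quotient. I would take $v_+$, the image in $V(\psi,\tau)$ of a highest weight vector of the loop module $V^0(\psi)$; it is nonzero and is annihilated by $G^+=\mathfrak{n}^+\otimes L=(\fg_{ss}^+\oplus\fg_{+1})\otimes L$. Fix $y\otimes q\in\fg_{-1}\otimes I$ and set $u=(y\otimes q)v_+$, a vector in the grade $-1$ piece. The first key computation is that $(\fg_{+1}\otimes L)u=0$: for $x\otimes f\in\fg_{+1}\otimes L$, the term $(y\otimes q)(x\otimes f)v_+$ vanishes because $(x\otimes f)v_+=0$, while, using $[\fg_{+1},\fg_{-1}]\subseteq\fg_0=\fg_{\bar0}$ and that $c$ acts by zero, the bracket contributes $[x\otimes f,y\otimes q]v_+=([x,y]\otimes fq)v_+\in(\fg_{\bar0}\otimes I)v_+=0$.

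Next I would exploit the $\Z$-grading $\fg=\fg_{-1}\oplus\fg_0\oplus\fg_{+1}$, whose grading element is the central element $z$ of $\fg_{\bar0}$ with $[z,\fg_{\pm1}]=\pm\fg_{\pm1}$. On $V(\psi,\tau)=\bigoplus_{p\ge0}\Lambda^p(\fg_{-1}\otimes L)V^0(\psi,\tau)$ the operator $z\otimes1$ acts by $\tau_0-p$ on the grade $-p$ summand; as these scalars are distinct, every $G'$-submodule is graded. Since $(\fg_{+1}\otimes L)u=0$, ordering the PBW factors as $\U(\fg_{-1}\otimes L)\,\U(\fg_{\bar0}\otimes L\oplus\C c)\,\U(\fg_{+1}\otimes L)$ and applying them to $u$ shows $\U(G')u\subseteq\bigoplus_{p\ge1}\Lambda^p(\fg_{-1}\otimes L)V^0(\psi,\tau)$, a graded submodule with zero grade-$0$ part. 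By irreducibility of $V(\psi,\tau)$ this submodule is zero, so $u=0$; hence $(\fg_{-1}\otimes I)v_+=0$. Combined with the first paragraph, $v_+\in W$, so $W=V(\psi,\tau)$ and $(\fg\otimes I)V(\psi,\tau)=0$.

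For the finiteness statement, once $(\fg\otimes I)V(\psi,\tau)=0$ the action of $\fg_{-1}\otimes L$ factors through the finite-dimensional space $\fg_{-1}\otimes(L/I)$, so $V(\psi,\tau)=\Lambda(\fg_{-1}\otimes(L/I))\,V^0(\psi,\tau)$ with a finite-dimensional exterior algebra. Since $V^0(\psi)$ has finite-dimensional weight spaces, each weight space of $V(\psi,\tau)$ is a finite sum of finite-dimensional spaces, hence finite-dimensional. I expect the delicate points throughout to be precisely that $c$ acts by zero (so $\mathfrak{i}$ behaves like an honest ideal on the module) and that $z\otimes1$ grades the module with distinct eigenvalues, forcing submodules to be graded; these are what make the grade-lowering argument close.
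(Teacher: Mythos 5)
Your proof is correct, and it rests on the same three pillars as the paper's argument: the bracket computation $[\fg_{+1}\otimes L,\,\fg_{-1}\otimes I]\subseteq \fg_{\bar 0}\otimes I+\C c$ combined with zero level and $(\fg_{\bar 0}\otimes I)V^0(\psi,\tau)=0$; the type I structure $[\fg_{-1},\fg_{-1}]=0$; and finite-dimensionality of $\Lambda(\fg_{-1}\otimes L/I)$ for the weight-space claim. Where you genuinely differ is in how the top-level vanishing is promoted to all of $V(\psi,\tau)$. The paper first shows $(\fn_{\bar 1}^-\otimes I)V(\psi,\tau)=0$ on the spanning set $y_{\a_1}(n_1)\cdots y_{\a_k}(n_k)u$, $u\in V^0(\psi,\tau)$, using anticommutativity of the odd lowering operators, and then runs an induction on $k$ to kill $\fg_{\bar 0}\otimes I$ and $\fn_{\bar 1}^+\otimes I$; you instead note that $\fg\otimes I\oplus\C c$ is an ideal of $G'$, so its annihilator $W$ is a submodule, and irreducibility reduces everything to the single vector $v_+$. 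Your route also fills in a step the paper compresses into ``it follows that $(y_\b\otimes I)V^0(\psi,\tau)=0$'': you justify the vanishing of $u=(y\otimes q)v_+$ in the irreducible quotient via the $z\otimes 1$-eigenvalue grading, which traps $\U(G')u$ in strictly negative grades, away from $v_+$. So the two proofs are cousins: yours avoids the spanning-set induction at the price of invoking semisimplicity of $z\otimes 1$ and the gradedness of submodules, while the paper's induction needs neither. One cosmetic caveat: for $\mathfrak{sl}(m,n)$ one has $[z,x]=\pm(n-m)x$ on $\fg_{\pm 1}$ rather than $\pm x$, so the eigenvalue on the grade $-p$ piece is $\tau_0-p\gamma$ with $\gamma\neq 0$ (this is precisely where $m\neq n$, and the analogous structure of $C(m)$, enter); the distinctness of eigenvalues that your argument needs is unaffected.
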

\begin{proof} For any positive odd root $\a$, we denote by $x_\a$ and $y_\a$ the root vectors of $\fg$ corresponding to $\a$
and $-\a$ respectively.  Then for all positive odd roots $\a, \b$, we have
$[x_\a\otimes I, y_\b\otimes I]\subset \mathfrak{g}_{\bar0}\otimes I$.
Since $(\mathfrak{g}_{\bar0}\otimes I)V^0(\psi, \tau)=0$, it follows that $(y_\b\otimes I)V^0(\psi, \tau)=0$ for all odd positive roots
$\b$.  Note that $V(\psi,\tau)$ is spanned by
\[
\{y_{\a_1}(n_1)\cdots y_{\a_k}(n_k)u \, |\, \a_i \text{ are odd roots}, \, n_i\in\Z, \, u\in V^0(\psi, \tau), \, k\ge 0\}.
\]
The Lie superalgebra $\fg$ is type I,  thus $[\mathfrak{n}_{\bar 1}^{-}, \mathfrak{n}_{\bar 1}^{-}]=0$,
and it immediately follows that  $(\mathfrak{n}_{\bar 1}^{-}\otimes I)V(\psi,\tau)=0.$

Now $(\fg_{\bar 0}\otimes I)\cdot (y_{\a_1}(n_1)u)=\{0\}$ and
$(\mathfrak{n}_{\bar 1}^{+}\otimes I)\cdot (y_{\a_1}(n_1)u)=0$ for any positive odd root $\a_1$.
By using induction on $k$, it is not difficult to show
that  $(\fg_{\bar 0}\otimes I)\cdot (y_{\a_1}(n_1)\cdots y_{\a_k}(n_k)u)=0$ and
$(\mathfrak{n}_{\bar 1}^{+}\otimes I)\cdot (y_{\a_1}(n_1)\cdots y_{\a_k}(n_k)u)=0$
for all positive odd roots $\a_1, \dots, \a_k$. Hence
$(\fg_{\bar 0}\otimes I)V(\psi,\tau)=(\mathfrak{n}_{\bar 1}^{+}\otimes I)V(\psi,\tau)=0$. This proves that $(\fg\otimes I)V(\psi,\tau)=0$.

Since $I$ is co-finite, $\U(\mathfrak{n}_{\bar1}^-\otimes L/I)=\wedge(\mathfrak{n}_{\bar1}^-\otimes L/I)$ is finite dimensional. This immediately leads to the second statement.
\end{proof}

\begin{remark}\label{rem:int}
Since $V^0(\psi, \tau)$ is an integrable module for $L(\fg_{ss})\oplus \C c$, the induced module $M(\psi,\tau)$ is integrable
with respect to $G'$, and so is also $V(\psi,\tau)$.
\end{remark}

\begin{proposition}\label{prop:psi}The $G'$-modules $V(\psi,\tau)$ and $V(\psi',\tau')$ are isomorphic if and only if
$\tau=\tau'$,  and
there exists a permutation $\sigma$ of $\{1,2,\dots,K\}$ such that $\underline{\l'}=\sigma(\underline{\l})$ and $\underline{a'}=\sigma(\underline{a})$.
\end{proposition}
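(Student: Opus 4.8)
The plan is to recognise $V(\psi,\tau)$ as an irreducible highest weight $G'$-module with a one-dimensional highest weight line, and to package the pair $(\psi,\tau)$ into a single algebra homomorphism $\Psi\colon\U(T_0)\to\C$. First I would exhibit the highest weight vector. Let $v_0$ be a highest weight vector of the evaluation module $V^0(\psi)$, so $(\fg_{ss}^+\otimes L)v_0=0$; in $V^0(\psi,\tau)$ it moreover satisfies $(\fn_{\bar 1}^+\otimes L)v_0=0$, and since $\fn^+\otimes L=(\fg_{ss}^+\otimes L)\oplus(\fn_{\bar 1}^+\otimes L)$ this gives $G^+v_0=0$. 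Because $\fh_{ss}\otimes L$ acts on $v_0$ by the scalars of $\psi$ in \eqref{eq:phi-ss}, $z\otimes L$ by the scalars $\tau_s$ coming from \eqref{eq:tau}, and $c$ by $0$, the subspace $\U(T_0)v_0=\C v_0$ is a one-dimensional $T_0$-module. Hence, in the notation of Section~\ref{sect:ev-mod}, $V(\psi,\tau)$ is the irreducible highest weight $G'$-module induced from the homomorphism $\Psi\colon\U(T_0)\to\C$ determined by $\Psi|_{\fh_{ss}\otimes L}=\psi$, $\Psi(z\otimes t^s)=\tau_s$ and $\Psi(c)=0$.

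Second, I would reduce the isomorphism question to the equality $\Psi=\Psi'$. It suffices to recover $\Psi$ intrinsically from the abstract module $V(\psi,\tau)$. The crucial observation is that its top $\fh$-weight space is one-dimensional: we have $V(\psi,\tau)=\U(\fn^-\otimes L)v_0$, and every nontrivial PBW monomial in $\fn^-\otimes L$ strictly lowers the $\fh$-weight, because $\fg_{-\a}\otimes t^m$ subtracts $\a$ regardless of the Laurent degree $m$. Thus $\bar\Lambda:=\mathrm{wt}_\fh(v_0)$ is maximal and $V(\psi,\tau)_{\bar\Lambda}=\C v_0$. Consequently the line $\C v_0$ is canonical, $\U(T_0)$ acts on it by $\Psi$, and $\Psi$ is an isomorphism invariant; conversely, equal $\Psi$ yield the same induced module and hence isomorphic simple quotients. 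Therefore $V(\psi,\tau)\cong V(\psi',\tau')$ if and only if $\Psi=\Psi'$, which splits into $\tau=\tau'$ (reading off the $z\otimes L$ modes) and $\psi=\psi'$ (reading off the $\fh_{ss}\otimes L$ modes).

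Third, I would decode $\psi=\psi'$ into the permutation statement. By \eqref{eq:phi-ss}, $\psi=\psi'$ means $\sum_j a_j^s\l_j(h)=\sum_j (a_j')^s\l_j'(h)$ for all $s\in\Z$ and all $h\in\fh_{ss}$. Fixing $h$ and invoking the linear independence over $\C$ of the functions $s\mapsto c^s$ attached to distinct nonzero $c$, I would collect terms by the common value $c$ of the $a$'s to obtain $\sum_{a_j=c}\l_j=\sum_{a_j'=c}\l_j'$ as $\fg_{ss}$-weights, for every $c\in\C^*$. Since the entries of $\underline a$ and of $\underline{a'}$ are distinct, each side has at most one summand; assuming (as is standard in the loop-module setting) that all $\l_j,\l_j'$ are nonzero so that no $a$-value can drop out, this forces every $a_j$ to equal some $a_{j'}'$ with $\l_j=\l_{j'}'$, producing the required permutation $\sigma$. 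The reverse implication is immediate, as $\psi$ in \eqref{eq:phi-ss} depends only on the multiset $\{(\l_j,a_j)\}$ and is therefore unchanged under $\sigma$.

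The genuinely delicate step is the intrinsic recovery of the \emph{full} homomorphism $\Psi$, that is, of all Laurent modes $\fh\otimes t^s$ rather than only the ordinary $\fh\oplus\C c$-weight. Since $G'$ carries no degree derivation $d$, the $\fh\oplus\C c$-weights cannot detect the loop grading, so one is forced to argue through the one-dimensionality of the top weight space that the entire $T_0$-action on $\C v_0$ is canonical. The secondary subtlety is the nonzero-weight hypothesis used in the third step, in whose absence an $a_j$ paired with $\l_j=0$ could be altered without changing $V(\psi,\tau)$.
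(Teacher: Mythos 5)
Your proof is correct, but it takes a genuinely different route from the paper's. The paper disposes of the proposition in two sentences: simple $G'$-modules obtained by inducing from the parabolic $\hat{\mathfrak{p}}=\fg_{\bar 0}\otimes L\oplus\C c\oplus\fn_{\bar 1}^{+}\otimes L$ determine, and are determined by, the inducing simple $\hat{\mathfrak{p}}$-module $V^0(\psi,\tau)$, and isomorphism of these top pieces is then a question about evaluation modules with a prescribed $z\otimes L$-character, for which the permutation criterion is the known classification. You instead compress all the data into the single character $\Psi$ of $\U(T_0)$ on the highest weight line, show that this line is canonical because the top $\fh$-weight space of $\U(\fn^-\otimes L)v_0$ is one-dimensional and maximal, and then extract the permutation $\sigma$ by hand from $\psi=\psi'$ via the linear independence of the functions $s\mapsto c^s$ for distinct $c\in\C^*$. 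What your route buys: the necessity direction, which is the only delicate point, is actually \emph{proved} rather than asserted --- the paper's claim that an isomorphism $V(\psi,\tau)\cong V(\psi',\tau')$ forces an isomorphism of the inducing $\hat{\mathfrak{p}}$-modules is stated without justification (and the printed proof even contains a typo, saying the modules $V^0$ determine themselves), whereas your one-dimensional-top-weight-space argument is self-contained; likewise your Vandermonde step replaces a citation to the evaluation-module classification. What the paper's route buys is brevity and uniformity: it keeps the proposition inside the same parabolic-induction formalism that drives Theorem \ref{thm:main}. Finally, your caveat that all $\l_j,\l_j'$ must be nonzero is well taken and is not a gap: without that convention the statement is literally false (for instance $K=1$, $\l_1=0$, $\tau=\tau'=0$ yields $\Psi=\Psi'=0$, hence isomorphic trivial modules, for arbitrary distinct $a_1,a_1'$), and the paper adopts exactly this convention elsewhere, e.g.\ in the proof of Lemma \ref{lem:ev-condition}, though it neglects to state it in Proposition \ref{prop:psi} itself.
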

\begin{proof} The given conditions are necessary and sufficient in order for one to have an isomorphism
 $V^0(\psi, \tau)\cong V^0(\psi', \tau')$ of simple $\mathfrak{p}$-modules. Since  $V^0(\psi, \tau)$
and $V^0(\psi', \tau')$ respectively determine $V^0(\psi, \tau)$  and $V^0(\psi', \tau')$ uniquely, the proposition
follows.
%
\end{proof}

\begin{lemma}\label{lem:ev-condition}
The $V(\psi,\tau)$ is an evaluation $L(\fg)$-module if and only if $\psi(z\otimes I')=0$.
\end{lemma}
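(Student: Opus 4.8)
The plan is to reduce the statement to a comparison of highest weight characters, exploiting the fact that $V(\psi,\tau)$ is itself a simple highest weight $G'$-module and that every evaluation $L(\fg)$-module is of highest weight type. First I would record that the highest weight vector $v_0$ of $V^0(\psi)$ remains a highest weight vector of $V(\psi,\tau)$: since $\mathfrak{n}_{\bar 0}^{+}=\fg_{ss}^{+}$ it is killed by $\fg_{ss}^{+}\otimes L$, by construction it is killed by $\mathfrak{n}_{\bar 1}^{+}\otimes L$, and $\U(T_0)v_0=\C v_0$ with $T_0$ acting through the extended character $\psi$, where $\psi(z\otimes t^s)=\tau_s$. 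As $v_0$ generates $M(\psi,\tau)$, the module $V(\psi,\tau)$ is the unique simple highest weight $G'$-module attached to the character $\psi$ on the full $T_0$ (the $G'$-analogue of the construction around \eqref{eq:Verma}). Consequently, an evaluation module and a module $V(\psi,\tau)$ are isomorphic precisely when their highest weight characters coincide.

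For the direction $(\Leftarrow)$, suppose $\psi(z\otimes I')=0$. Since $P'(t)=\prod_i(t-a_i)$ has distinct roots, the solution space of the associated difference equation is spanned by $s\mapsto a_i^s$, so $\tau_s=\sum_i a_i^s\mu_i$ for unique scalars $\mu_i\in\C$. I would then lift each $\fg_{ss}$-weight $\l_i$ to a weight $\nu_i\in\fh^*$ by setting $\nu_i|_{\fh_{ss}}=\l_i$ and $\nu_i(z)=\mu_i$. Because $\fg$ is of type I, finite dimensionality of the irreducible highest weight module $V(\nu_i)$ is governed solely by $\fg_{\bar 0}$-dominant integrality while the value on the central direction $z$ is unconstrained; hence each $V(\nu_i)$ is finite dimensional. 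Forming the evaluation module $V(\underline{\nu},\underline{a})$ as in \eqref{eq:ev-G'} and computing its highest weight character by \eqref{eq:hw-tensor}, one finds $(h\otimes t^s)\mapsto\sum_i a_i^s\nu_i(h)$, which restricts to $\psi$ on $\fh_{ss}\otimes L$ and sends $z\otimes t^s$ to $\sum_i a_i^s\mu_i=\tau_s$. Thus its character equals $\psi$, and by the first paragraph $V(\psi,\tau)\cong V(\underline{\nu},\underline{a})$ is an evaluation module.

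For $(\Rightarrow)$, assume $V(\psi,\tau)\cong V(\underline{\nu},\underline{c})$ is an evaluation module at distinct nonzero points $c_j$; discarding trivial tensor factors we may take every $V(\nu_j)$ nontrivial, hence faithful over the simple $\fg$. The crux is to show $\{c_j\}\subseteq\{a_i\}$. For this I would combine Proposition \ref{prop:finite-weight}, which gives $(\fg\otimes I)V(\psi,\tau)=0$ for $I=(\prod_i(t-a_i)^{b_i})$, with Remark \ref{rem:ideal}, by which the action of $\fg\otimes L$ on $V(\underline{\nu},\underline{c})$ factors through $\zeta$ (see \eqref{eq:zeta}) with kernel exactly $\fg\otimes I_{\underline{c}}$, where $I_{\underline c}=(\prod_j(t-c_j))$; faithfulness forces $I\subseteq I_{\underline c}$, that is, $\prod_j(t-c_j)$ divides $\prod_i(t-a_i)^{b_i}$, whence $\{c_j\}\subseteq\{a_i\}$. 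Reading off the central action through the isomorphism gives $\tau_s=\sum_j c_j^s\nu_j(z)$, which is an exponential sum supported on $\{a_i\}$ and therefore annihilated by $z\otimes I'$; that is, $\psi(z\otimes I')=0$. The \emph{main obstacle} is precisely this points-containment step: one must disentangle the central coordinate $z$ from $\fg_{ss}$ and rule out evaluation points outside $\{a_i\}$, and it is here that Proposition \ref{prop:finite-weight}, the kernel description of $\zeta$, and the faithfulness of nontrivial $\fg$-modules do the essential work.
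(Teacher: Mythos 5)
Your proposal is correct, and it takes a genuinely different route from the paper's half-page proof. For the direction assuming $\psi(z\otimes I')=0$, the paper argues abstractly: the hypothesis upgrades the annihilation $(\fg_{ss}\otimes I')V^0(\psi)=0$ to $(\fg_{\bar 0}\otimes I')V^0(\psi,\tau)=0$, the induction of Proposition \ref{prop:finite-weight} (run with $I'$ in place of $I$) then gives $(\fg\otimes I')V(\psi,\tau)=0$, so the action factors through $\fg\otimes L/I'\cong \fg_K$ by Remark \ref{rem:ideal}, and the module is an evaluation module because finite dimensional irreducible $\fg_K$-modules are tensor products of irreducible $\fg$-modules. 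You instead solve the order-$K$ recurrence to write $\tau_s=\sum_i \mu_i a_i^s$, lift each $\l_i$ to $\nu_i\in\fh^*$ with $\nu_i(z)=\mu_i$, and identify modules by matching highest weight characters; the type-I fact that finite dimensionality of $V(\nu_i)$ constrains only $\nu_i|_{\fh_{ss}}$, leaving the value on $z$ free, is exactly what legitimizes the lift, and it does hold for $\mathfrak{sl}(m,n)$ ($m\ne n$) and $C(m)$. Your version is more constructive (it exhibits the evaluation datum $(\underline{\nu},\underline{a})$ explicitly) at the cost of this representation-theoretic input, while the paper's is shorter but leans on the unstated classification of irreducible $\fg_K$-modules. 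For the converse, the paper simply computes $\psi(z(m))=\sum_i a_i^m\l_i(z)$ and declares the vanishing clear, tacitly identifying the evaluation points of the given evaluation module with the $a_i$ fixed in the construction of $V(\psi,\tau)$; your points-containment step $\{c_j\}\subseteq\{a_i\}$, obtained by combining $(\fg\otimes I)V(\psi,\tau)=0$ with ${\rm ker}\,\zeta=\fg\otimes I_{\underline{c}}$ and faithfulness of the nontrivial factors, supplies precisely what the paper leaves implicit and is the more careful treatment. The one compression worth expanding in your write-up: the claim that the kernel of the $L(\fg)$-action is \emph{exactly} $\fg\otimes I_{\underline{c}}$ needs the short check that an element $(x_1,\dots,x_K)\in\fg_K$ annihilating a tensor product of faithful irreducibles must act by a scalar $\mu_j$ in each slot, whence each $x_j$ is central in the simple $\fg$ and therefore zero; with that sentence added, your argument is complete.
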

\begin{proof} If $\psi(z\otimes I')=0$, then $\fg\otimes I'$ acts on $V(\psi, \tau)$ by zero.  It follows from equation \eqref{eq:zeta} and Remark \ref{rem:ideal} that  $V(\psi,\tau)$ is an evaluation module.  Given any simple evaluation module $V(\underline{\l}, \underline{a})$ (we may and will assume that the entries of $\underline{\l}$ are all nonzero), we have $\psi(z(m))=\sum_{i=1}^K a_i^m \lambda_i(z)$ for all $m\in\Z$ by \eqref{eq:zeta}. Simplicity of $V(\underline{\l}, \underline{a})$ requires that the entries of $\underline{a}=(a_1, \dots, a_K)$ are all distinct.
It is clear that $\psi(z\otimes I')=0$ for $I'=\prod_{i=1}^K(t-a_i)$.
\end{proof}

We turn $V(\psi,\tau)\otimes L$ into a $G$-module using \eqref{L-module}.
Then by Theorem \ref{iso} we have the following  $G$-module isomorphism
\[
V(\psi,\tau)\otimes L\cong \oplus_{i=0}^{r-1}\U(G)w(i),
\]
 where $w$ is a highest weight vector of $V(\psi,\tau)$ and $w(i)=w\otimes t^i$ with $i\in \Z$.
 Note that $\U(G)w(i)$ are irreducible $G$-submodules.

\begin{definition}\label{def:V}
We denote by $\hat{V}(\varphi,\tau)$ the  irreducible $G$-module  $\U(G)w(0)$.
\end{definition}

Here ${\hat V}(\varphi,\tau)$ is a highest weight module, where  $w(0)$ is a highest weight vector.
We have
\begin{eqnarray}
\begin{aligned}
&c w(0)=0, \quad  d w(0)= b w(0), \\
&(z\otimes t^s) w(0)= \tau_s w(s), \\
&(h\otimes t^s) w(0) = \sum a_j^s\l_j(h) w(s ), \quad h\in \fh_{ss}
\end{aligned}
\end{eqnarray}
for all $s\in \Z$, where the $\tau_s$ satisfy \eqref{eq:tau}.
This induces a graded algebra homomorphism $\U(T_0)/\U(T_0)c\longrightarrow L$ since $c$ acts by zero.
The image of this map is  $L_r$ for some integer $r\ge 0$ by \cite[\S 4]{ch86}.  When $r>1$,
we necessarily have $\tau_s=0$ unless $r|s$.

\begin{lemma}\label{lem:V} Under the given conditions on $\underline{\l}$,
$\underline{a}$ and $\tau$,  the irreducible $G$-module ${\hat V}(\varphi,\tau)$
is integrable with finite dimensional weight spaces.
\end{lemma}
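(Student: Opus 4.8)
The plan is to establish the two required properties first for the larger $G$-module $V(\psi,\tau)\otimes L$ and then to descend them to the submodule $\hat V(\varphi,\tau)=\U(G)w(0)$, which sits inside $V(\psi,\tau)\otimes L$ as a direct summand by the decomposition recorded just before Definition~\ref{def:V}. The two facts I would feed in are Remark~\ref{rem:int}, giving that $V(\psi,\tau)$ is an integrable $G'$-module, and Proposition~\ref{prop:finite-weight}, giving that its $\fh$-weight spaces are finite dimensional.

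For the weight spaces, I would read off the $H^*$-weights of $V(\psi,\tau)\otimes L$ straight from the formulas \eqref{L-module}. Since $c$ acts as $0$ and $d$ acts on $w\otimes t^s$ by the scalar $s+b$, prescribing a weight $\nu$ pins down the single integer $s=\nu(d)-b$, and the corresponding weight space is precisely $V(\psi,\tau)_{\nu|_\fh}\otimes t^s\cong V(\psi,\tau)_{\nu|_\fh}$, which is finite dimensional by Proposition~\ref{prop:finite-weight}. This simultaneously exhibits the honest weight-space decomposition $V(\psi,\tau)\otimes L=\bigoplus_{\mu,s}V(\psi,\tau)_\mu\otimes t^s$ required by part (i) of the definition of integrability, with $d$ playing the role of separating the $t$-degrees; it parallels the reasoning behind Theorem~\ref{iso}(2).

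For part (ii), integrability over the even part $G_{\bar0}$, I would check that each real root vector $x(m)=x\otimes t^m$, with $x$ a root vector of $\fg_{\bar0}$, acts locally nilpotently. This is immediate from the identity $x(m)^N(w\otimes t^s)=\bigl(x(m)^Nw\bigr)\otimes t^{s+Nm}$: local nilpotency on $V(\psi,\tau)\otimes L$ is reduced to local nilpotency of $x(m)$ on $V(\psi,\tau)$, which is exactly the integrability of $V(\psi,\tau)$ over $G'_{\bar0}$ supplied by Remark~\ref{rem:int}. Combined with the weight-space decomposition above, this shows $V(\psi,\tau)\otimes L$ is an integrable $G$-module with finite dimensional weight spaces.

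Finally, since $\hat V(\varphi,\tau)=\U(G)w(0)$ is a $G$-submodule of $V(\psi,\tau)\otimes L$, its weight spaces are subspaces of those computed above, hence finite dimensional, and the real root vectors continue to act locally nilpotently on it; thus it inherits both properties. I expect no serious obstacle here, as the whole argument is a transfer of already-established facts across the functor $-\otimes L$. The one point deserving a line of care is that the central element $z\in\fh$ enters only through the fixed sequence $\tau$, and since $z$ carries $\fh$-weight $0$, the operators $z\otimes t^s$ preserve each finite dimensional $\fh$-weight space and disturb neither the grading nor the nilpotency estimates.
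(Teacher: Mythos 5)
Your proof is correct and follows essentially the same route as the paper: the paper's proof likewise reduces both properties of ${\hat V}(\varphi,\tau)$ to the corresponding properties of $V(\psi,\tau)$ via the action formulas \eqref{L-module}, citing Remark~\ref{rem:int} for integrability and Proposition~\ref{prop:finite-weight} for finite dimensional weight spaces. You merely make explicit the weight-space and local-nilpotency computations on $V(\psi,\tau)\otimes L$ that the paper compresses into ``one can see that \dots if and only if \dots''.
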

\begin{proof}
From equation \eqref{L-module} one can see that  ${\hat V}(\varphi,\tau)$ is integrable over $G$ if and only if
$V(\psi,\tau)$ is integrable over $G'$. Similarly,
${\hat V}(\varphi,\tau)$ has finite dimensional weight spaces (with respect to $\mathfrak{h}\oplus\C c \oplus \C d$)
if and only if $V(\psi, \tau)$ has finite dimensional weight spaces (with respect to $\mathfrak{h}\oplus\C c)$.
It therefore immediately follows from Remark \ref{rem:int} that ${\hat V}(\varphi,\tau)$ is integrable.
By Proposition \ref{prop:finite-weight}, $V(\psi, \tau)$ has finite dimensional weight spaces,
thus ${\hat V}(\varphi,\tau)$ also has finite dimensional weight spaces.
\end{proof}

\begin{proposition}\label{prop:iso-class}
The modules ${\hat V}(\varphi,\tau)$ and ${\hat V}(\varphi',\tau')$ are isomorphic if and only if
all of the following conditions are satisfied:

(a) $b'\equiv b \ ({\rm mod}\, r)$ when $L_\varphi=L_r$,

(b) for $r>0$, there exists $\kappa\in\C\setminus \{0\}$ and a permutation $\sigma$ of $\{1,2,\dots,K\}$ such that
\begin{eqnarray*}
\underline{\l'}=\sigma(\underline{\l}), \quad \underline{a'}=\kappa\sigma(\underline{a}),\quad
(\tau'_i)_{i\in \Z}=(\kappa^i\tau_i)_{i\in\Z}. \label{eq:tau-tau'}
\end{eqnarray*}
\end{proposition}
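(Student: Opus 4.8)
The plan is to reduce the isomorphism problem to a comparison of highest weight data. Since $\fg$ is of type $A(m,n)$ or $C(m)$, both ${\hat V}(\varphi,\tau)$ and ${\hat V}(\varphi',\tau')$ are irreducible highest weight $G$-modules with respect to \eqref{eq:triangular}, with highest weight vectors $w(0)$ and $w'(0)$. In such a module the subspace annihilated by $G^+$ is intrinsic and equals the top $T$-module $\U(T)w(0)\cong L_r$: indeed any $G^+$-annihilated vector generates all of $\hat V(\varphi,\tau)$ by irreducibility, so by $\fh$-weight considerations it must sit in the top. On this top module $c$ acts by $0$, $d$ by $t\frac{d}{dt}+b$, and each $h\otimes t^m$ ($h\in\fh$) by multiplication by $\Phi_h(m)t^m$, where $\Phi_h(m)=\sum_j a_j^m\l_j(h)$ for $h\in\fh_{ss}$, $\Phi_z(m)=\tau_m$, and $\Phi_h(m)=0$ unless $r\mid m$. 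Thus an isomorphism of $G$-modules is the same as an isomorphism of these top $T$-modules, and the proposition reduces to deciding when $L_\Phi\cong L_{\Phi'}$, which we claim happens precisely under (a) and (b).

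For sufficiency I would exhibit the isomorphism directly, from three elementary moves. First, reordering the tensor factors of $V(\underline\l,\underline a)$ gives $V(\psi,\tau)\cong V(\psi^\sigma,\tau)$ for any permutation $\sigma$, which passes to ${\hat V}$ after forming $\cdot\otimes L$ as in \eqref{L-module}. Second, the rescaling $\g_\kappa:x\otimes t^m\mapsto\kappa^m x\otimes t^m$, $c\mapsto c$, $d\mapsto d$ is an automorphism of $G$; writing $v''$ for $v$ regarded in the twisted module $V(\psi'',\tau'')=V(\psi,\tau)^{\g_\kappa}$, the linear map $v\otimes t^s\mapsto\kappa^s v''\otimes t^s$ intertwines $V(\psi,\tau)\otimes L$ with $V(\psi'',\tau'')\otimes L$, where $\underline{a''}=\kappa\underline a$ and $\tau''_m=\kappa^m\tau_m$, and it carries $w(0)$ to $w''(0)$; restricting to $\U(G)w(0)$ yields ${\hat V}(\varphi,\tau)\cong{\hat V}(\varphi'',\tau'')$. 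Third, the loop-module structure of Theorem \ref{iso}(1) organises $V(\psi,\tau)\otimes L$ into exactly $r$ summands, forcing the isomorphism class of $\U(G)w(0)$ to depend on $b$ only modulo $r$; this accounts for the freedom $b'\equiv b\pmod r$ in (a). Composing the three moves produces an isomorphism whenever (a) and (b) hold.

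For necessity I would start from an abstract isomorphism $F:{\hat V}(\varphi,\tau)\to{\hat V}(\varphi',\tau')$. Since the $G^+$-annihilated subspace is intrinsic, $F$ restricts to a $T$-module isomorphism $L_\Phi\cong L_{\Phi'}$. Comparing the $d$-spectra $b+r\Z$ and $b'+r'\Z$ forces $r=r'$ and $b'\equiv b\pmod r$, i.e. condition (a); writing $b-b'=\ell r$ with $\ell\in\Z$, the isomorphism must have the form $t^{ir}\mapsto\mu_i t^{(i+\ell)r}$ with $\mu_i\in\C^*$. Imposing compatibility with each $h\otimes t^{kr}$ gives $\Phi_h(kr)\mu_{i+k}=\mu_i\Phi'_h(kr)$ for all $i,k$ and all $h\in\fh$; setting $\nu_k=\mu_k/\mu_0$ one finds $\nu$ is multiplicative on the support of the data, hence $\nu_k=\kappa^{kr}$ for a single $\kappa\in\C^*$, so that $\Phi'_h(m)=\kappa^m\Phi_h(m)$ for all $m$. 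The $z$-component of this identity is exactly $\tau'_m=\kappa^m\tau_m$, while the $\fh_{ss}$-component reads $\sum_j(a'_j)^m\l'_j=\sum_j(\kappa a_j)^m\l_j$ for all $m\in\Z$; by linear independence of the characters $m\mapsto c^m$ for distinct $c\in\C^*$ this yields a permutation $\sigma$ with $\underline{\l'}=\sigma(\underline\l)$ and $\underline{a'}=\kappa\sigma(\underline a)$, which is condition (b).

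The step I expect to be most delicate is the extraction of $\sigma$ and $\kappa$ in the necessity direction. The character-independence argument cleanly matches the pairs $(a_j,\l_j)$ with $\l_j\ne0$, but an evaluation point $a_j$ with $\l_j=0$ is invisible to the $\fh_{ss}$-data and must be recovered from the polynomial-exponential expansion $\tau_m=\sum_j Q_j(m)a_j^m$ of the center-data; one must check that every $a_j$ occurring in the ideal $I$ is detected either by a nonzero $\l_j$ or by a nonzero $Q_j$, so that $\sigma$ is well defined, after which Proposition \ref{prop:psi} and Lemma \ref{lem:ev-condition} ensure consistency. A secondary point needing care is the multiplicativity of $\nu_k$, which requires the support $\{k:\Phi_h(kr)\ne0\text{ for some }h\}$ to generate $\Z$; this holds because the data are nonzero exponential-polynomials supported on $r\Z$.
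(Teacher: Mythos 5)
Your proposal is correct and takes essentially the same route as the paper: both reduce the question to the intrinsic top $T$-module $\cong L_r$ of $G^+$-annihilated vectors, get sufficiency from the twist $t\mapsto \kappa t$ together with permutation of tensor factors and the mod-$r$ shift coming from Theorem \ref{iso}(1), and get necessity by showing the two graded characters of $\U(T_0)$ on the top differ by $m\mapsto\kappa^m$ --- your functional equation $\Phi_h(kr)\mu_{i+k}=\mu_i\Phi'_h(kr)$ and the multiplicativity $\nu_k=\kappa^{rk}$ (justified by surjectivity of $\varphi$ onto $L_r$) is exactly the explicit form of the paper's choice of $x$ with $\varphi(x)=t^r$, $\varphi'(x)=\rho t^r$ and $h(rm)\equiv \eta(h(rm))\,x^m \bmod \mathrm{Ker}(\varphi)$, with $\rho=\kappa^r$. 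The degenerate case you flag, an $a_j$ with $\lambda_j=0$ that is also invisible in the exponential-polynomial expansion of $\tau$, is a genuine over-parametrization under which condition (b) is not literally necessary, but the paper's own proof silently skips it too (it is repaired by normalizing the data so that each $a_j$ is detected either by $\lambda_j\ne 0$ or by $\tau$, in the spirit of the normalization used before Lemma \ref{lem:ev-condition}), so on this point your write-up is if anything more careful than the paper's.
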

\begin{remark} If $\tau\in\mathcal{T}_I$ for the ideal $I=(P(t))$ of $L$ generated by $P(t)=\prod_{j=1}^K(t-a_j)^{b_j}$, 
then $\tau'\in\mathcal{T}_J$, where 
$J=(Q(t))$ with $Q(t)=\prod_{j=1}^K(t-\kappa a_j)^{b_j}$. 
\end{remark}
\begin{proof}[Proof of  Proposition \ref{prop:iso-class}] The $r=0$ case is clear, thus we only need to consider the case $r>0$.

The given conditions imply that the irreducible representation
of $G$ on ${\hat V}(\varphi',\tau')$ is the composition of the representation on ${\hat V}(\varphi,\tau)$ and the algebra automorphism of $L$ given by $t\mapsto \kappa t$. Thus the representations are isomorphic.

To prove the converse, we note that the necessity of condition (a) is clear.
The actions of $\U(T_0)$ on the spaces of highest weight vectors of ${\hat V}(\varphi,\tau)$ and
${\hat V}(\varphi',\tau')$ respectively induce surjective graded algebra homomorphisms $\varphi,  \varphi': \U(T_0) \longrightarrow L_r$ for some $r$. As we mentioned before, this in particular implies that
$\varphi(h(m))=\varphi'(h(m))=0$ for all $h(m)\in L(\fh)$ if $r\notdivides m$.
Let $\bar\varphi: \U(T_0)/Ker(\varphi) \longrightarrow L_r$ and
$\bar\varphi': \U(T_0)/Ker(\varphi') \longrightarrow L_r$ be the corresponding canonical isomorphisms.
Let $\bar{x}=\bar\varphi^{-1}(t^r)$, then $\bar\varphi^{-1}(t^{rm})=\bar{x}^m$ for all $m\in \Z$.  If  $x$ is a representative of $\bar{x}$ in $\U(T_0)$, we have $\varphi'(x)=\rho t^r$ for some $\rho\in\C\backslash\{0\}$.  Furthermore,  given any
$h(r m)\in L(\fh)$, we have some $\eta(h(r m))\in \C$ such that
$h(r m)\equiv \eta(h(r m)) x^m \mod Ker(\varphi)$ for all $m$.  Thus
\[
\varphi(h(rm))=  \eta(h(r m)) t^{mr}, \quad        \varphi'(h(rm))=\rho^m \eta(h(r m)) t^{mr}.
\]
These relations lead to the conditions (b) with $\rho=\kappa^r$.
\end{proof}

\section{Classification theorem}\label{sect:main}
In this section, we classify the  irreducible zero-level integrable modules with finite dimensional weight spaces
for the untwisted affine superalgebras $\hat{A}(m,n)$  ($m\ne n$) and  $\hat{C}(m)$.
The following theorem is the main result.

\begin{theorem} \label{thm:main} Let $G$ be either $\hat{A}(m,n)$  ($m\ne n$) or $\hat{C}(m)$.
Any irreducible zero-level integrable $G$-module  with finite dimensional weight spaces is isomorphic to
${\hat V}(\varphi,\tau)$ (see Definition \ref{def:V}) for some  $\varphi$ and $\tau$.
\end{theorem}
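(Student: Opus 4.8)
The plan is to reduce the classification to the structural results already established for the even part and the highest weight theory developed earlier in the paper. By Theorem \ref{thm:hw}, any irreducible zero-level integrable $G$-module $V$ with finite dimensional weight spaces is a highest weight module, so there exists a highest weight vector $v$ annihilated by $G^+$, and $V\cong\hat V(\varphi)$ for some $\varphi$ attached to the $T$-module structure on the line $\C v$. First I would restrict attention to $V$ as a module over $G'=L(\fg)\oplus\C c$ and examine the action of the even affine subalgebra $\fg_{\bar0}\otimes L\oplus\C c$ on the top space. The key observation is the $\Z$-grading $\fg=\fg_{-1}\oplus\fg_0\oplus\fg_{+1}$ with $\fg_0=\fg_{\bar0}$ and $\fg_{\pm1}=\mathfrak{n}_{\bar1}^\pm$; since $\fg$ is of type I we have $[\mathfrak{n}_{\bar1}^-,\mathfrak{n}_{\bar1}^-]=0$, which is exactly the property exploited in the proof of Proposition \ref{prop:finite-weight}.

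Next I would analyze the cyclic $\fg_{\bar0}\otimes L\oplus\C c$-submodule $W$ generated by $v$. Writing $\fg_{\bar0}=\fg_{ss}\oplus\C z$, the action of $\fg_{ss}\otimes L\oplus\C c$ on $W$ is integrable of zero level with finite dimensional weight spaces, so by Theorem \ref{thm:ev} (together with Remark \ref{semi}, applied to the semisimple $\fg_{ss}$) it is a simple loop module; its underlying $\fg_{ss}\otimes L$-module is the evaluation module $V^0(\psi)$ for some $\underline\l$, $\underline a$, after factoring through a cofinite ideal by Theorem \ref{iso}(3) and Remark \ref{ideal}. The central element $z\otimes t^s$ commutes with $\fg_{ss}\otimes L\oplus\C c$, so it acts on the highest weight line by scalars $\tau_s$, and the finiteness of weight spaces forces $(z\otimes I)W=0$ for some cofinite ideal $I=(P(t))$; this puts $\tau=(\tau_s)$ precisely in the set $\mathcal{T}_I$ of \eqref{eq:tau}. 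This identifies $W$ with a module of the form $V^0(\psi,\tau)$. The hardest part of the argument will be pinning down the ideal $I$ and showing that the datum $(\psi,\tau)$ so extracted is consistent: one must verify that the difference equation $\sum_i c_i\tau_{i+m}=0$ holds, i.e. that the scalars $\tau_s$ really are annihilated by $(z\otimes I)$ rather than merely by some larger ideal, and that the choices of $\underline\l$, $\underline a$, $b_j$ can be made compatibly.

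Having identified the top space as $V^0(\psi,\tau)$, I would extend it to the parabolic $\hat{\mathfrak p}=\fg_{\bar0}\otimes L\oplus\C c\oplus\mathfrak{n}_{\bar1}^+\otimes L$ by letting $\mathfrak{n}_{\bar1}^+\otimes L$ act as zero, which is legitimate since $G^+v=0$ and $\mathfrak{n}_{\bar1}^+\otimes L\subset G^+$. Then $V$, being generated by $v$ over $G'$, is a quotient of the induced module $M(\psi,\tau)=\U(G')\otimes_{\U(\hat{\mathfrak p})}V^0(\psi,\tau)$; by irreducibility of $V$ and the uniqueness of the simple quotient $V(\psi,\tau)$, we get $V\cong V(\psi,\tau)$ as $G'$-modules. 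Finally I would incorporate the action of $d$: the derivation acts semisimply with integer-spaced eigenvalues, and by Theorem \ref{iso}(1) the $G$-module $V$ sits inside $V(\psi,\tau)\otimes L$ as one of the summands $\U(G)w(i)$. Choosing the grading shift $b$ appropriately via the remark $\hat V(\tilde\varphi,b+a)\cong\hat V(\tilde\varphi,b)\otimes\C_a$, we match $V$ with $\U(G)w(0)=\hat V(\varphi,\tau)$ of Definition \ref{def:V}.

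To summarize the order of steps: invoke Theorem \ref{thm:hw} to get a highest weight vector $v$ with $G^+v=0$; analyze the even affine action on the cyclic span of $v$ and apply Theorem \ref{thm:ev}/Remark \ref{semi} to recognize it as an evaluation/loop module $V^0(\psi)$; extract the central scalars $\tau\in\mathcal{T}_I$ and assemble the $\fg_{\bar0}\otimes L\oplus\C c$-module $V^0(\psi,\tau)$; extend to $\hat{\mathfrak p}$, induce to $G'$, and use irreducibility to identify $V$ with $V(\psi,\tau)$; then fix the $d$-action to conclude $V\cong\hat V(\varphi,\tau)$. I expect the main obstacle to be the second and third steps — reconstructing the precise evaluation datum $(\underline\l,\underline a)$ and verifying that the central sequence $\tau$ genuinely satisfies the defining linear recurrence of \eqref{eq:tau}, so that the abstractly-produced highest weight module is exactly one of the explicitly constructed modules $\hat V(\varphi,\tau)$ rather than some a priori larger family. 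Uniqueness of the resulting data is then controlled by Propositions \ref{prop:psi} and \ref{prop:iso-class}.
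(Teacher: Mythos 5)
Your outline reproduces the paper's architecture (Theorem \ref{thm:hw}, even-part analysis over $\fg_{ss}$, extraction of $\tau$, parabolic induction over $\hat{\mathfrak{p}}$, final appeal to Theorem \ref{iso}(1)), but there is a genuine misstep in \emph{where} you carry out the middle steps: you perform them inside $V$ itself, and there the assertions you need are false. In $V$ the highest weight space is not the line $\C v$ but $M=\U(T_0)v\cong L_r$ (the paper's first Claim plus \cite{ch86}), and $z\otimes t^s$ raises the $d$-eigenvalue by $s$, so it cannot act on any line in $V$ by the scalar $\tau_s$; more generally, since $h\otimes t^s$ and $z\otimes t^s$ shift $d$-weights and $V$ has a weight decomposition with finite support in each vector, no nonzero vector of $V$ is a simultaneous eigenvector with the eigenvalues prescribed by $\psi$ once $r>0$. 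Consequently the cyclic $\fg_{\bar0}\otimes L\oplus\C c$-module $W\subset V$ is a loop-type module, not the finite dimensional evaluation module $V^0(\psi)$; by Frobenius reciprocity there is no $\hat{\mathfrak{p}}$-map $V^0(\psi,\tau)\to V$ at all, so $V$ is not a quotient of $M(\psi,\tau)$, and your claim that ``$V\cong V(\psi,\tau)$ as $G'$-modules'' fails: $V$ restricted to $G'$ is not irreducible (already for $r=1$ the collapse map $V(\psi,\tau)\otimes L\to V(\psi,\tau)$, $w(s)\mapsto w$, is a $G'$-map with proper nonzero kernel). Indeed this claim contradicts your own next sentence, in which $V$ is a proper summand $\U(G)w(i)$ of $V(\psi,\tau)\otimes L$. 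The paper avoids all of this by first passing from $V\cong\hat V(\varphi)$ to the auxiliary $G'$-module $V_{ev}=V(\psi)$ with $\psi=S\circ\tilde\varphi$, via Theorem \ref{iso}(2)(3); the even-part analysis, the scalars $\tau_s=\psi(z\otimes t^s)$, and the induction all take place inside $V(\psi)$, where the highest weight space genuinely is a line and the $\fh$-weight spaces are finite dimensional, and only at the end does one return to $V$ through Theorem \ref{iso}(1), identifying $V$ with $\U(G)w(0)=\hat V(\varphi,\tau)$ inside $V(\psi,\tau)\otimes L$.

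Two further points. First, you invoke Theorem \ref{thm:ev} (via Remark \ref{semi}) to conclude that the even-part cyclic module is a simple loop module, but that theorem presupposes irreducibility; proving that $V^0(\psi)=\U(L(\fg_{ss})\oplus\C c)w$ is irreducible is exactly the paper's second Claim, established by a PBW/weight argument from the irreducibility of $V_{ev}$ over $G'$, and your sketch (which also omits the analogous first Claim about $\U(T_0)v$) leaves it unaddressed. In fact the paper does not use Theorem \ref{thm:ev} here: it kills $\fg_{ss}\otimes J'$ with $J'$ generated by the multiplicity-free polynomial $Q'(t)$ (as in \cite{rz04}), and then identifies $V^0(\psi)$, being irreducible integrable with finite dimensional weight spaces and hence finite dimensional, as a tensor product of evaluation modules via Remark \ref{rem:ideal}. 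Second, the step you single out as the hardest --- verifying the recurrence $\sum_i c_i\tau_{i+m}=0$, i.e.\ $\tau\in\mathcal{T}_I$ of \eqref{eq:tau} --- is in fact immediate: finite dimensionality of the weight spaces $V(\psi)_{\mu-\a}$ produces polynomials $P_\a$ with $(y_\a\otimes P_\a(t))w=0$, Theorem \ref{iso}(3) gives $(\fg\otimes I)V(\psi)=0$ for $I$ generated by $\prod_\a P_\a(t)$, and restricting to $z\otimes I$ yields the recurrence at once. The actual content of the proof lies precisely in the $V\leftrightarrow V(\psi)$ bookkeeping and the two irreducibility claims that your proposal passes over.
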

\begin{proof} Let $V$ be an irreducible zero-level integrable $G$-module with finite dimensional weight spaces.
By Theorem \ref{h-vector}, there exists a highest weight vector $v$ with weight $\lambda\in H^*$, that is,
$G^+ v=0$ and $h v=\lambda(h) v$  for all $h\in H$.

\begin{claim} 
Let $M=\U(T_0)v$. Then $M$ is an irreducible $T_0$-module.
\end{claim}

Let $w_1,w_2\in M$ be two weight vectors. Then by the irreducibility of $V$, there exists $g\in G$ such that $gw_1=w_2.$
Write $g=\sum_ig_i^- h_i g_i^+$, where $g_i^-\in \U(G^-),\,h_i\in \U(T_0)$, $g_i^+\in \U(G^+).$
Note that $G^+ w_i=0.$ Hence $w_2=\sum_ig_i^- h_i w_1$, which forces all $g_i^-$ to be scalars by weight considerations.
Hence $w_1=h w_2$ for some $h\in \U(T_0)$, that is,  $M$ is an irreducible $T_0$-module.
Observe that $M$ is also an irreducible $T$-module.

\medskip

Since $c$ acts as
zero on $M$ it follows that there exists a maximal graded ideal $\mathcal{N}$ of
$\mathfrak{S}=\U(T_0)/\U(T_0)c$ such that $M\cong \mathfrak{S}/\mathcal{N}$ as
$T_0$-modules. It is  known from \cite{ch86} that
$M\cong \mathfrak{S}/\mathcal{N}\cong L_r$ for some integer $r\ge 0$.
Let $\tilde\varphi$ be the natural  map defined by the following
composition
\begin{equation}\label{varphi}
\tilde\varphi:\U(T_0)\xrightarrow[]{\mbox{action}} M\cong \mathfrak{S}/\mathcal{N}\cong
L_r\subset L.
\end{equation}
Clearly, $\tilde\varphi$ is $\Z$-graded, $\tilde\varphi(c)=0$ and $\tilde\varphi|_\mathfrak{h}\in \mathfrak{h}^*$.
Moreover  $V$ is isomorphic to $\hat{V}(\varphi)$, the irreducible quotient of the induced module defined by \eqref{eq:Verma}
with $\varphi=(\tilde\varphi, \lambda(d))$.
If $r=0$, the module is trivial. Thus we shall assume $r>0$ hereafter.

As in Section 2, we construct a simple module $V(\psi)$ for $G'=L(\fg)\oplus \C c$ from $V\cong \hat{V}(\varphi)$
by setting $\psi=S\circ \tilde\varphi$. Since $V$ has finite dimensional weight spaces,
it follows from Theorem \ref{iso}
that $V(\psi)$ has finite dimensional weight spaces and there exists a co-finite ideal $I$ of $L$ such that
\begin{equation}
(\fg\otimes I)V(\psi)=0.
\end{equation}

This ideal can be determined as follows (for more detail see \cite[Lemma 3.7]{r04}).
Let $w$ be a highest weight vector of $V(\psi)$ and let $\mu$ be its weight.
For  each simple root $\a\in \Pi$, we let $y_\a$ be a root vector for the root $-\a$. Consider
$
\{y_\a(s) w|s\in\Z\},
$
which is contained in the same weight space $V(\psi)_{\mu-\a}$. Since $\dim V(\psi)_{\mu-\a}<\infty$,
there exists a non-zero polynomial $P_{\a}(t)$ such that $(y_\a\otimes P_\a(t))w=0.$
Set $P(t)=\prod_{\a\in\Pi}P_\a(t)$. Then $I$ is the ideal generated by $P(t)$.

To avoid confusion, we change the notation of $V(\psi)$ to $V_{ev}$.
Recall  that $\fg_{ss}$ is the semi-simple part of $\fg_{\bar{0}}$. Regard $V_{ev}$ as a module for $L(\fg_{ss})\oplus \C c$,
and set $V^0(\psi)=\U(L(\fg_{ss})\oplus\C c)w$.

\begin{claim} 
$V^0(\psi)$ is an irreducible module for $L(\fg_{ss})\oplus \C c$.
\end{claim}
Recall the standard triangular decomposition $\fg_{ss}=\fg_{ss}^-\oplus \mathfrak{h}_{ss}\oplus \fg_{ss}^+$ for $\fg_{ss}$.
Since $(\fg^+_{ss}\otimes L)w=0$ and $[\fg^+_{ss}\otimes L,T]\subset \fg^+_{ss}\otimes L$, we have
$
V^0(\psi)=\U(\fg_{ss}^-\otimes L)w.
$
Let $u\in V^0(\psi)$ be a weight vector.  So we can write $u=\sum_ig_i h_i w$ for some $g_i\in \U(\fg_{ss}^-\otimes L)$
and $h_i\in \U(\mathfrak{h}_{ss}\otimes L)$.  Since $V_{ev}$ is an irreducible $G'$-module,
there exists $x\in \U(G^+)$ such that $x(\sum_i g_i h_i w)=w$.   Weight considerations require
$x\in \U(\fg^+_{ss}\otimes L).$  This proves that $V^0(\psi)$ is an irreducible module for $L(\fg_{ss})\oplus \C c$.

Since $V(\psi)$ has finite dimensional $\fh$-weight spaces, so does also $V^0(\psi)$.
As $z$ is in the center of $\fg_{\bar0}+\C c$, and acts on $w$ by the multiplication by the scalar $\psi(z)$, 
it acts on the entire $V^0(\psi)$ by the multiplication by $\psi(z)$. 
It follows that $V^0(\psi)$ has finite dimensional $\fh_{ss}$-weight spaces.
Therefore, there exists a co-finite ideal $J$ of $L$ such that $(\fg_{ss}\otimes J)V^0(\psi)=0$, where
$J$ is generated by  $Q(t):=\prod_{\a\in \Delta^+_{\bar 0}\cap\Pi}P_\a(t)$.

 By remark \ref{ideal} we  can  assume that $Q(t)$ has non-zero roots.  The module $V^0(\psi)$ is trivial
 when $Q(t)$ is a constant. Thus we will further assume that $Q(t)$ is not a constant.
 Then up to a scalar multiple, $Q(t)$ has the unique factorisation  $Q(t)=\prod\limits_{i=1}^S(t-a_i)^{s_i}$,
 where $a_i$ are distinct non-zero complex numbers, $s_i$ and $S$ are positive integers.
 Let $Q'(t)=\prod\limits_{i=1}^S(t-a_i)$ and let  $J'$ be the ideal generated by $Q'(t)$.
 Similar arguments as those  in \cite[Proposition 5.2]{rz04} show that
$
(\fg_{ss}\otimes J')V^0(\psi)=0.
$

Therefore $V^0(\psi)$  is a $\fg_{ss}\otimes L /J'$-module.  Because of the particular form of the generator $Q'(t)$ of $J'$,
 we have $\fg_{ss}\otimes L /J'\cong \underbrace{\fg_{ss}\oplus \cdots \oplus \fg_{ss}}_S$ by Remark \ref{rem:ideal}
. Since $V^0(\psi)$
 is an irreducible integrable module, it is finite dimensional. Clearly
any finite dimensional irreducible module for $\underbrace{\fg_{ss}\oplus \cdots \oplus \fg_{ss}}_S$
is isomorphic to $V(\l_1)\otimes  \cdots \otimes V(\l_S)$ for some $\l_i\in \fh^*_{ss}$,
which are integral dominant with respect to $\mathfrak{g}_{ss}$.
Here $ V(\l_i)$ denotes an irreducible $\fg_{ss}$-module with highest weight $\lambda_i$.
Therefore, $V^0(\psi)$ is isomorphic to $V(\l_1)\otimes  \cdots \otimes V(\l_S)$ as
an irreducible module for $L(\fg_{ss})\oplus \C c$ via
the map \eqref{eq:zeta}.

Again by Remark \ref{ideal} we can assume that $P(t)$ has non-zero roots. We can also assume that $P(t)$ is not a constant.
Since $P(t)$ has $Q(t)$ as a factor, it factorises into
\[
P(t)=\prod\limits_{i=1}^S(t-a_i)^{b_i}\prod\limits_{j=S+1}^K(t-a_j)^{b_j}
 \]
for some $K\ge S$. Here all the $a_i$ are distinct nonzero complex numbers,
and $b_i\ge s_i$ if $1\le i\le S$.

 Let $I$ be the ideal of $L$ generated by $P(t)$. Set $P'(t)=\prod\limits_{i=1}^K(t-a_i),$
 and let $I'$ be the ideal of $L$ generated by $P'(t)$. Clearly, $I'\subset J'$.
 For $S+1\le j\le K$, let $V(\l_j)=V(0)=\C$ be the one-dimensional $\fg_{ss}$-module. We have
\[
V^0(\psi)\cong V(\l_1)\otimes \cdots \otimes V(\l_K)
\]
with the action given by equation \eqref{eq:zeta}. This is also an
isomorphism of $\fg_{ss}\otimes L/I'$-modules.

Let $\tau\in (z\otimes L)^*$ be defined by $\tau(x)=\psi(x)$ for all $x\in z\otimes L$. Then 
$\tau\in \mathcal{T}_I$ (see \eqref{eq:tau}). Note that 
$\tau(z\otimes t^s)=\psi(z\otimes t^s)=0$ if $r\notdivides  s$ since $\varphi(z\otimes t^s)=0$ if $r\notdivides  s$. 
Now 
$z\otimes t^s$ acts on the highest weight
vector $w$ by $(z\otimes t^s) w=\tau(z\otimes t^s)w$. Since $[z\otimes L,L(\fg_{ss})\oplus \C c]=0$, for all $u\in V^0(\psi)$,
\[
(z\otimes t^s)u=\tau(z\otimes t^s)u,\quad s\in\Z.
\]
This makes $V^0(\psi)$ into an irreducible $\fg_{\bar 0}\otimes L\oplus \C c$-module. 

As $(\mathfrak{n}_{\bar 1}^+\otimes L) w=0$ and
$[\mathfrak{n}_{\bar 1}^+\otimes L, L(\fg_{ss})\oplus \C c]\subset  \mathfrak{n}_{\bar 1}^+\otimes L$, we have
 $(\mathfrak{n}_{\bar 1}^+\otimes L) V^0(\psi)=0.$  
Regard $V^0(\psi)$ as a module for $\mathfrak{p}\otimes L/I\oplus \C c$ where $\mathfrak{p}:=\fg_{\bar 0}\oplus \mathfrak{n}_{\bar 1}^+$,   and denote it by $V^0(\psi, \tau)$.
 Construct the induced module
 \[
 M(\psi,\tau)=\U(\fg\otimes L/I\oplus \C c)\otimes_{\U(\mathfrak{p}\otimes L/I\oplus \C c)} V^0(\psi, \tau).
 \]
Then the unique irreducible quotient  module $V(\psi,\tau)$ of $M(\psi,\tau)$ is isomorphic to $V_{ev}$.
It then follows from part (1) of Theorem \ref{iso} that $V\cong {\hat V}(\varphi, \tau)$ for some $\varphi$ and $\tau$ as in Definition \ref{def:V}.
\end{proof}

\begin{remark}\label{rem:final}
From Lemma \ref{lem:ev-condition} we see that not all $\hat{V}(\varphi, \tau)$ are loop modules, but the lemma
provides the necessary and sufficient conditions for $\hat{V}(\varphi, \tau)$ to be one.
\end{remark}

\section{Concluding remarks}
Theorem \ref{thm:main} and Theorem \ref{thm:ev}  together classify the irreducible zero-level integrable modules with finite dimensional weight spaces for $\hat{A}(m,n)$ ($m\ne n$) and $\hat{C}(m)$ .
In view of results of \cite{rz04} and \cite{JZ, kw01} discussed in Section \ref{sect:intro}, this completes the
classification of the irreducible integrable modules with finite dimensional
weight spaces for all the untwisted affine superalgebras.
An interesting fact in the case of $\hat{A}(m,n)$ ($m\ne n$) and $\hat{C}(m)$ is that such modules are
are not necessarily loop modules.

\vspace{1cm}

\noindent {\bf Acknowledgement.}  We thank S. Rao for helpful comments on an earlier version of this paper.
This work was supported by the Chinese National Natural Science Foundation grant No. 11271056,
Australian Research Council Discovery-Project grant No. DP0986349, Qing Lan Project of Jiangsu
Province, and Jiangsu Overseas Research and Training Program for Prominent Young and Middle Aged University Teachers and Presidents.   This paper was completed while
both authors were visiting the University of Science and Technology of China, Hefei.

\vspace{.5cm}

\end{document}